\definecolor{col1}{HTML}{d7191c}
\definecolor{col2}{HTML}{fdae61}
\definecolor{col3}{HTML}{abdda4}
\definecolor{col4}{HTML}{2b83ba}
\DeclareMathOperator{\Tr}{Tr}
\DeclareMathOperator{\E}{\mathbf{E}}
\DeclareMathOperator{\Prob}{\mathbf{P}}
\newcommand{\ii}{\mathrm{i}}
\renewcommand{\C}{\mathbf{C}}
\newcommand{\C}{\mathbf{C}}
\newcommand{\R}{\mathbf{R}}
\newcommand{\DD}{\mathbf{D}}
\newcommand{\cO}{\mathcal{O}}
\newcommand{\co}{{\scriptstyle\mathcal{O}}}
\newcommand{\dif}{\operatorname{d}\!{}}
\DeclarePairedDelimiter{\braket}{\langle}{\rangle}%
\DeclarePairedDelimiter{\abs}{\lvert}{\rvert}%
\DeclarePairedDelimiter{\norm}{\lVert}{\rVert}%
\providecommand\given{}
\newcommand\SetSymbol[1][]{\nonscript\:#1\vert\allowbreak\nonscript\:\mathopen{}}
\DeclarePairedDelimiterX{\tuple}[1](){\renewcommand\given{\SetSymbol[\delimsize]}#1}
\DeclarePairedDelimiterX{\set}[1]{\{}{\}}{\renewcommand\given{\SetSymbol[\delimsize]}#1}
\DeclarePairedDelimiterX{\Set}[1]\{\}{\renewcommand\given{\SetSymbol[\delimsize]}#1}
\DeclarePairedDelimiterXPP{\landauO}[1]{\cO}(){}{#1}
\DeclarePairedDelimiterXPP{\landauo}[1]{\co}(){}{#1}
\DeclarePairedDelimiterXPP{\landauok}[1]{\co_k}(){}{#1}
\DeclarePairedDelimiterXPP{\landauOprec}[1]{\cO_\prec}(){}{#1}
\DeclarePairedDelimiterXPP{\Exp}[1]{\E}[]{}{\renewcommand\given{\SetSymbol[\delimsize]}#1}
\DeclarePairedDelimiterXPP{\condProb}[1]{\Prob}[]{}{\renewcommand\given{\SetSymbol[\delimsize]}#1}
\DeclareFontFamily{U}{mathx}{\hyphenchar\font45}
\DeclareFontShape{U}{mathx}{m}{n}{
      <5> <6> <7> <8> <9> <10>
      <10.95> <12> <14.4> <17.28> <20.74> <24.88>
      mathx10
      }{}
\DeclareSymbolFont{mathx}{U}{mathx}{m}{n}
\DeclareMathAccent{\widecheck}{0}{mathx}{"71}
\pgfplotsset{select coords between index/.style 2 args={
    x filter/.code={
        \ifnum\coordindex<#1\fi
        \ifnum\coordindex>#2\fi
    }
}}
\newtheorem{theorem}{Theorem}
\newtheorem{lemma}[theorem]{Lemma}
\newtheorem{remark}[theorem]{Remark}
\newtheorem{corollary}[theorem]{Corollary}
\newtheorem{conjecture}[theorem]{Conjecture}
\numberwithin{theorem}{section}
\date{\today}
\author{Giorgio Cipolloni \and L\'aszl\'o Erd\H{o}s}
\address{IST Austria, Am Campus 1, 3400 Klosterneuburg, Austria}
\author{Dominik Schr\"oder\(^{\ast}\)}
\address{Institute for Theoretical Studies, ETH Zurich, Clausiusstr.\ 47, 8092 Zurich, Switzerland}
\email{giorgio.cipolloni@ist.ac.at}  
\email{lerdos@ist.ac.at} 
\email{dschroeder@ethz.ch}
\thanks{\(^\ast\)Supported by Dr.\ Max R\"ossler, the Walter Haefner Foundation and the ETH Z\"urich Foundation}
\subjclass[2020]{60B20, 68W40, 65F35} 
\keywords{Smoothed Analysis, Supersymmetric formalism, Circular Law}
\title[On the condition number of the  
shifted real Ginibre ensemble]{On the condition number  of the shifted real Ginibre ensemble}
\date{\today} 
\begin{document}  
\thispagestyle{empty}

\begin{abstract}  
    We derive an accurate lower tail estimate on the lowest singular value 
    $\sigma_1(X-z)$ of a \emph{real} Gaussian (Ginibre) random matrix $X$ shifted by a \emph{complex}
    parameter $z$. Such shift  effectively changes
    the upper tail behaviour of the condition number $\kappa(X-z)$ from the slower $\Prob(\kappa(X-z)\ge t)\lesssim 1/t$ decay 
    typical for real Ginibre matrices 
    to the faster $1/t^2$ decay seen for complex Ginibre matrices
    as long as $z$ is away from the real axis. 
    This sharpens and resolves a recent conjecture
    in~\cite{2005.08930} on the regularizing effect of the real Ginibre
    ensemble with a genuinely complex shift.
    As a consequence we obtain an improved upper bound on the eigenvalue condition 
    numbers (known also as the eigenvector overlaps) 
    for real Ginibre matrices. The main  technical tool is a rigorous supersymmetric 
    analysis from our earlier work~\cite{Cipolloni2020}.
\end{abstract}

\maketitle

\section{Introduction}
The condition number $\kappa(X)= \| X\|\|X^{-1}\|$ 
of large $N\times N$ random matrices $X$ has been a central object in numerical linear algebra
at least since the pioneering work of Goldstine and von-Neumann~\cite{MR41539}, and Demmel~\cite{MR929546}. Demmel showed that for a large class of \emph{complex} random matrices $X$
the probability that $\kappa(X)$ is larger than a threshold  $t\gg 1$ 
decays as $1/t^2$, while for \emph{real} matrices the decay rate is slower, of order $1/t$.
While the dependence on $N$ was not optimal in Demmel's work, for the specific Gaussian 
case much more precise results are available.
Gaussian random matrices have  frequently been used as a test case
since often explicit formulas are available for their spectral distribution.

The simplest non-Hermitian random matrix model is the \emph{real} or \emph{complex Ginibre ensemble}, 
consisting of matrices with
independent identically distributed (i.i.d) Gaussian matrix elements.  We fix the
customary normalization, $\E x_{ab}=0$, $\E |x_{ab}|^2= N^{-1}$ that  guarantees
that the density of eigenvalues of $X$ converges to the uniform measure on the complex unit
disk (known as the \emph{Circular law}) and that the spectral radius  of $X$
converges to 1 with very  high  probability (these results also hold for non-Gaussian matrix elements, see
e.g.~\cite{MR773436,MR1428519,MR2409368,MR866352,MR863545,MR3813992,2012.05602}), c.f.~\cref{fig circ law}.
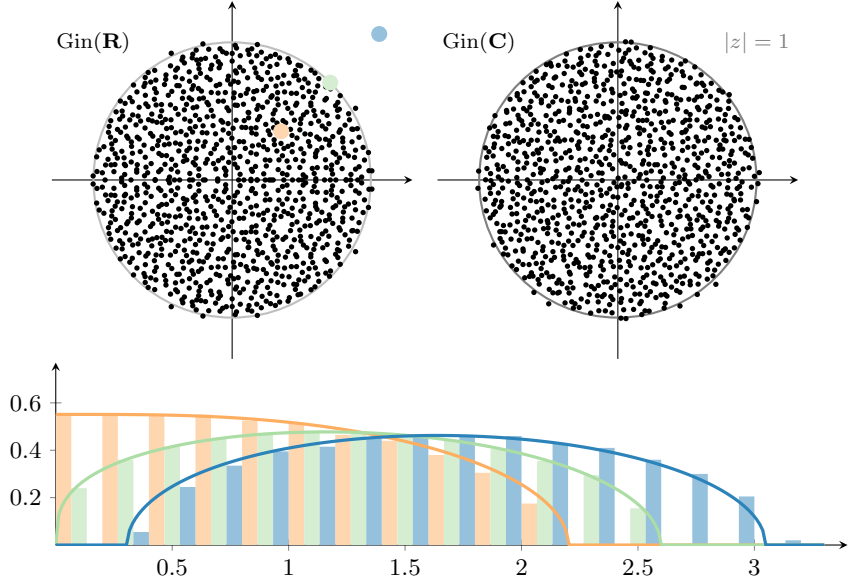
\begin{figure}
    \begin{tikzpicture}
    \begin{axis}[xmin=-1.3,xmax=1.3,ymin=-1.3,ymax=1.3,width=20em,height=20em,axis lines=middle,ticks=none]
        \addplot [only marks,draw=none,mark size=1pt,mark options={draw opacity=0,fill=black}] table [col sep=comma] {500r.csv};
        \addplot [only marks,draw=none,mark size=3pt,mark options={draw opacity=0,fill=col2!50}] [select coords between index={1}{1}] table [col sep=comma] {500zs.csv};
        \addplot [only marks,draw=none,mark size=3pt,mark options={draw opacity=0,fill=col3!50}] [select coords between index={2}{2}] table [col sep=comma] {500zs.csv};
        \addplot [only marks,draw=none,mark size=3pt,mark options={draw opacity=0,fill=col4!50}] [select coords between index={3}{3}] table [col sep=comma] {500zs.csv};
        \draw(axis cs:0,0) [draw=lightgray,thick] circle[radius=1];
        \node  at (axis cs:-1,1) {\small\(\mathrm{Gin}(\R)\)};
    \end{axis}
\end{tikzpicture}\quad
\begin{tikzpicture}
    \begin{axis}[xmin=-1.3,xmax=1.3,ymin=-1.3,ymax=1.3,width=20em,height=20em,axis lines=middle,ticks=none]
        \addplot [only marks,draw=none,mark size=1pt,mark options={draw opacity=0,fill=black}] table [col sep=comma] {500c.csv};
        \draw(axis cs:0,0) [draw=gray,thick] circle[radius=1];
        \node at (axis cs:1,1) {\small\color{gray}\(|z|=1\)};
        \node  at (axis cs:-1,1) {\small\(\mathrm{Gin}(\C)\)};
    \end{axis}
\end{tikzpicture}\\
\begin{tikzpicture} 
    \begin{axis}[width=12cm,height=4cm,axis lines=middle,xmax=3.4,ymax=.77%
        ]
        \addplot[ybar,bar width=.0666,fill=col2!50,bar shift=0.0333,area legend,draw=none] table[col sep=comma,x index=0,y index=2] {histSing.csv};
        \addplot[ybar,bar width=.0666,fill=col3!50,bar shift=0.1,area legend,draw=none] table[col sep=comma,x index=0,y index=3] {histSing.csv};
        \addplot[ybar,bar width=.0666,fill=col4!50,bar shift=0.1666,area legend,draw=none] table[col sep=comma,x index=0,y index=4] {histSing.csv};
        \addplot[col2,very thick,mark=none] table[col sep=comma,x index=0,y index=2] {rhosSing.csv};
        \addplot[col3,very thick,mark=none] table[col sep=comma,x index=0,y index=3] {rhosSing.csv};
        \addplot[col4,very thick,mark=none] table[col sep=comma,x index=0,y index=4] {rhosSing.csv};
    \end{axis}
\end{tikzpicture}
\caption{The top figure shows the eigenvalues of a single real and complex Ginibre matrix. Note that the eigenvalues of the real Ginibre matrix are symmetric with respect to the real axis, and that some (in fact \(\sim\sqrt{N}\)) eigenvalues are on the axis itself. The bottom figure shows the singular values of \(X-z\) for three different values of \(z\) in histogram form (for the single matrix whose eigenvalues are displayed in the top-left figure), together with their theoretic density (solid lines). The singular value density depends only on the absolute value \(\abs{z}\) and not on the phase of \(z\) since the effect of the real axis is only visible in the density of the smallest singular values, and not in the global density of all singular values. We note that  the singular value density is strictly positive at \(0\) whenever \(\abs{z}<1\).}\label{fig circ law}
\end{figure}
Edelman in~\cite{MR964668}
gave an exact formula  for the distribution of the lowest singular value of a Ginibre
matrix in both symmetry classes and derived precise large \(N\) asymptotics
for the condition number, confirming Demmel's upper tail decay on the distribution of  $\kappa(X)$
uniformly  in the dimension. 
Non-asymptotic upper and lower 
bounds with good explicit constants were obtained  in~\cite{MR2124157}
for the real case and later extended to rectangular 
matrices~\cite{MR2179688, MR2208323}
in both symmetry classes.

In more recent applications Ginibre matrices arise as  additive perturbations of a  deterministic matrix $A$.
The prominent example is the concept of \emph{smoothed analysis} (originally 
introduced  in~\cite{MR2145860} in the context of the simplex algorithm), where
Sankar, Spielman and Teng~\cite{MR2255338} considered 
the Gaussian elimination algorithm without pivoting for solving large dimensional linear systems of equations 
\(A{\bf x}={\bf b}\). 
The \emph{bit-complexity} of Gaussian elimination, i.e.\ the computational cost of achieving a desired output accuracy, depends primarily on $\kappa(A)$
and its upper tail is mainly  determined  by the lower tail behaviour of $\sigma_1(A)$, 
the lowest singular value of $A$ (note that $\sigma_1(A)^2 =\lambda_1(AA^*)$, the lowest eigenvalue of $AA^*$).
In order to  obtain a bound on the \emph{real world accuracy loss} of the problem  \(A{\bf x}={\bf b}\), 
rather than the \emph{averaged}
or \emph{worst case accuracy loss}, 
the main result of~\cite{MR2255338} is an estimate on the smoothed value of \(\kappa(A+\gamma X)\) 
for small \(\gamma\) and Ginibre-distributed \(X\).  In practice  $\gamma$ is then optimized to balance between the gain in bit-complexity versus the loss in precision. 

More recently smoothed analysis has been applied to the problem of
finding eigenvalue/eigenvector pairs~\cite{MR3801817} and to full matrix diagonalization~\cite{1906.11819,1912.08805} by Banks, Vargas, Kulkarni and Srivastava. This required to develop the ideas of  smoothed analysis for the \emph{eigenvector condition number} (see~\cref{kappa v} later)
in~\cite{1906.11819} and then in~\cite{1912.08805} further to the minimal eigenvalue gap (see~\cref{Delta def} later).

In~\cite{MR2255338}, 
the authors proved\footnote{The paper~\cite{MR2255338} states the result only for the real case, but the complex case easily follows by the same proof.}
the following lower tail bound on the (square of the) lowest singular value of the regularised matrix $A+\gamma X$:  
\begin{equation}\label{SSTreal1} 
    \Prob\biggl(\sigma_1(A+\gamma X)\le \frac{\sqrt{x}}{N}\biggr)\le C \sqrt{\frac{x}{\gamma^2}},
    \qquad x>0,
\end{equation}
for a real Ginibre matrix $X$, and
\begin{equation}\label{SSTcomplex1}
    \Prob\left(\sigma_1(A+\gamma X)\le \frac{\sqrt{x}}{N}\right)\le C \frac{x}{\gamma^2},
    \qquad x>0,
\end{equation}
for a complex Ginibre matrix $X$. 
The constant $C$ is universal, the estimates
are uniform in $A$ and $\gamma$.  The $N^{-2}$ scaling naturally comes from
the typical $1/N$ spacing between the eigenvalues of the corresponding Hermitized matrix
\begin{equation}
    H^A: = \begin{pmatrix}  0 &  A+\gamma X \cr  (A+\gamma X)^* & 0 \end{pmatrix}
\end{equation}
in its bulk spectrum.
Comparing 
the bounds~\cref{SSTreal1} and~\cref{SSTcomplex1} in the small $x$ regime,
note that the regularizing effect of a complex Ginibre matrix is much stronger.
Can one achieve the same  effect with real Ginibre matrices?

On one hand,  
inspecting the proof in~\cite{MR2255338},
the exponents of $x$ in the right hand side of~\cref{SSTreal1} and~\cref{SSTcomplex1} 
are direct consequences of the one- vs.\ two-dimensionality of the support of the real vs.\ complex
random variables $x_{ab}$ and the effect is completely independent of $A$.
On the other hand, quite remarkably, the local \emph{eigenvalue} statistics of the real and
complex Ginibre ensemble coincide away from the real axis, see~\cite[Theorem 11]{MR2530159}.
Very recently in~\cite{2105.13720, MR4235475} we showed an analogous phenomenon for the singular values of
the \emph{shifted Ginibre matrix}.  More precisely, in~\cite{2105.13720} the density of the low lying 
singular values of $X-z$  for a real and complex Ginibre $X$ was shown to coincide
if the shift parameter $z$ is genuinely complex, $|\Im z|\gg N^{-1/2}$. In the regime $|\Im z|\sim 1$ the same coincidence was proven for all $k$-point correlations functions~\cite[Theorem 2.8]{MR4235475}. In particular, on the level of
the small singular values,
the \emph{real} Ginibre matrix with a \emph{complex} shift behaves as a \emph{complex} Ginibre matrix!

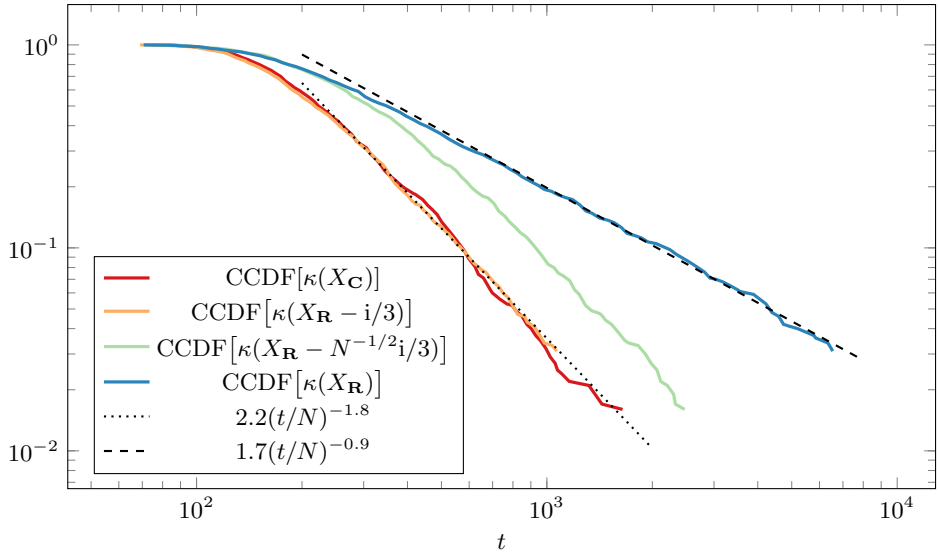
\begin{figure}[htbp]
    \centering
    \begin{tikzpicture}
        \begin{axis}[width=13cm,height=8cm,%
            xmode=log,ymode=log,legend pos=south west,xlabel=\(t\)
            ]
            \addplot[col1,very thick,mark=none] table[col sep=comma,x index=0,y index=1] {kappaDecay.csv};
            \addlegendentry{\(\mathrm{CCDF}[\kappa(X_\C)]\)}
            \addplot[col2,very thick,mark=none] table[col sep=comma,x index=2,y index=3] {kappaDecay.csv};
            \addlegendentry{\(\mathrm{CCDF}\bigl[\kappa(X_\R-\ii/3)\bigr]\)}
            \addplot[col3,very thick,mark=none] table[col sep=comma,x index=6,y index=7] {kappaDecay.csv};
            \addlegendentry{\(\mathrm{CCDF}\bigl[\kappa(X_\R-N^{-1/2}\ii/3)\bigr]\)}
            \addplot[col4,very thick,mark=none] table[col sep=comma,x index=4,y index=5] {kappaDecay.csv};
            \addlegendentry{\(\mathrm{CCDF}\bigl[\kappa(X_\R)\bigr]\)}
            \addplot [thick, domain=200:2000, samples=19,dotted] {2.26*(x/100)^(-1.8)};
            \addplot [thick, domain=200:8000, samples=19,dashed] {1.72*(x/100)^(-0.94)};
            \addlegendentry{\(2.2(t/N)^{-1.8}\)}
            \addlegendentry{\(1.7(t/N)^{-0.9}\)}
        \end{axis}
    \end{tikzpicture}
    \caption{The complementary cumulative distribution functions (CCDF) \(\mathbf P(\kappa\ge t)\) for the condition number \(\kappa(X-z)\) of shifted Ginibre matrices obtained from \(10\,000\) random matrices of size \(100\times 100\). Away from the real axis the probability of having condition number larger than \(t\) decays as \(t^{-2}\) for both real and complex Ginibre matrices. On the real axis the real Ginibre ensemble has a slower tail of \(t^{-1}\) and exhibits an interpolating behaviour as \(\Im z\sim N^{-1/2}\).}\label{kappa decay figure} 
\end{figure} 

For the purpose of the smoothed analysis this  indicates the possibility that real Ginibre matrices
are as effectively regularizing as the complex ones, at least  away from the real axis.
To test this hypothesis, we  consider the  simplest $A=-zI$ case, the shifted Ginibre ensemble. In fact, the following
conjecture in this spirit was very  recently posed in Section 7 of~\cite{2005.08930} (with our notations
and with $\gamma=1$ for simplicity):
\begin{equation}
    \label{conj}
    \Prob\left(\sigma_1(X-z)\le \frac{\sqrt{x}}{N}\right)\le C \frac{x}{ |\Im z|},
    \qquad x>0.
\end{equation}
Here and in the sequel we will frequently omit the identity matrix for brevity and write simply $X-z$ for $X-zI$. 
While~\cref{conj} highlights the role of $\Im z$, its scaling is far from optimal:
by analogy with the eigenvalues, one expects that a real Ginibre matrix $X$ near $z$ behaves 
essentially as a complex Ginibre matrix as long as $|\Im z|\gg N^{-1/2}$. Indeed, our main result
in~\cref{theo:bsmaleig} shows that
\begin{equation}
    \label{conj1}
    \Prob\left(\sigma_1(X-z)\le \frac{\sqrt{x}}{N}\right)
    \lesssim (1+\abs{\log x})x + e^{-\frac{1}{2}N|\Im z|^2}
    \min \Big\{\sqrt{x}, \frac{x}{\sqrt{N}|\Im z|} \Big\},
    \quad x>0,
\end{equation}
proving that the essentially linear bound (in $x$)
from~\cref{SSTcomplex1}  already dominates the tail behaviour
of the lowest singular value for $|\Im z|\gg N^{-1/2}$,  while the much larger $\sqrt{x}$ tail 
prevails in the opposite regime. 
Since $\kappa(X-z)  \sim \sigma_1(X-z)^{-1}$, we directly  obtain
the transition   from   $1/t$  to $1/t^2$ for
the upper tail  $\Prob ( \kappa(X-z) \ge t)$ as $|\Im z|$ increases well above $N^{-1/2}$, see~\cref{kappa decay figure}. 
A similar  behaviour is expected to hold for general matrix $A$, see~\cref{conj:smb}.

The bound~\cref{conj1} has several consequences on the \emph{eigenvalue condition number}
$\kappa(\lambda):= \| L\| \| R\|$ that determines the stability of the eigenvalue 
$\lambda$ against small perturbations,  where $L, R$ denote the corresponding left
and right  eigenvectors with  the customary normalization \(\braket{L,R}=1\).
For complex Ginibre matrices it is known that $\kappa(\lambda)$ is of order $\sqrt{N}$, see~\cite{PhysRevLett.81.3367,MR4095019,MR3851824}. For the real Ginibre case
a similar result is obtained in~\cite{MR3851824} but only for real eigenvalues $\lambda\in \R$.
Suboptimal bounds in $N$
have very recently been established in~\cite{2005.08930} and~\cite{2005.08908} that  also hold
for a general matrix $A$, in particular for any shift $z\in \C$.

The proof of our main estimate~\cref{conj1} uses the  supersymmetric (SUSY)
approach that is common in the physics literature on random matrices, but 
is less known  in the numerics community. Most of 
the necessary technical work has already been done in our previous paper~\cite{Cipolloni2020};
hence the current paper is short and focuses on the results. Our purpose  is to
demonstrate  the power of the SUSY  method 
to obtain very accurate estimates.
For example, the exponential suppression factor
$\exp(-\frac{1}{2} N|\Im z|^2)$ in~\cref{conj1} expressing the true effect of
the non-zero imaginary part of the shift parameter seems very hard to obtain 
with any other  method, while it easily comes out from the SUSY formalism.

\subsection*{Notations and conventions} 
For positive quantities \(f,g\) we write \(f\lesssim g\) and \(f\sim g\) if \(f \le C g\) or \(c g\le f\le Cg\), respectively, for some constants \(c,C>0\) which are independent of  $N$ and $z$.  We write \(\DD\subset \C\) for the open unit disk. We abbreviate the minimum and maximum of real numbers by \(a\wedge b:=\min\set{a,b}\) and \(a\vee b:=\max\set{a,b}\).  

\section{Main results}
We consider the ensemble $Y^z:=(X-z)(X-z)^*$ with $X\in  \R^{N\times N}$ being a \emph{real Ginibre} matrix, i.e.\ its entries $x_{ab}$ are such that $\sqrt{N}x_{ab}$ are i.i.d.\ standard real Gaussian random variables, and $z\in\C$ is a fixed complex parameter such that $|z|\le 1$. Our main results are 
an optimal lower tail estimate for the smallest singular value of \(X-z\) and its consequences for eigenvector overlaps and eigenvalue condition number of real Ginibre matrices. 

\subsection{Singular value and condition number tail estimates for \texorpdfstring{$X-z$}{X-z}}
The following theorem  gives an estimate on 
the lowest singular value of $X-z$ %
uniformly in all the relevant parameters and on the optimal scale. Its  direct
corollary is an analogous estimate on $\kappa(X-z)$.%

\begin{theorem}\label{theo:bsmaleig}
    Let $\eta:=\Im z$,  $\delta:=1-|z|^2$ and
    \begin{equation}\label{cn}
        c(N,\delta):=\frac{1}{N^{3/2}}\wedge \frac{1}{N^2|\delta|}.
    \end{equation}
    Then, denoting by $\lambda_1(Y^z)$ the smallest eigenvalue of $Y^z$, uniformly in $x\in [0, \infty)$, $\eta\in [-1,1]$, $\delta\in [-10N^{-1/2},1]$, it holds
    \begin{equation}\label{eq:optb}
        \Prob\left(\lambda_1(Y^z)\le x c(N,\delta)\right)\le C_*(1+\abs{\log x})x+C_*e^{-\frac{1}{2}N\eta^2} \Bigl(\sqrt{x} \wedge \frac{x}{\sqrt{N}\abs{\eta}}\Bigr)
    \end{equation}
    where $C_*$ is a universal constant.
\end{theorem}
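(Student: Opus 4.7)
The strategy is to reduce the lower tail estimate on $\lambda_1(Y^z)$ to a pointwise upper bound on the one-point density of eigenvalues of $Y^z$ near zero, and then to derive that density bound from the supersymmetric (SUSY) integral representation worked out in \cite{Cipolloni2020}.

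First, apply the standard Markov-type inequality
\[
\Prob(\lambda_1(Y^z) \le T) \le \Exp{\#\{i : \lambda_i(Y^z) \le T\}} = \int_0^T \rho^z(\lambda)\, d\lambda,
\]
where $\rho^z$ is the one-point density of eigenvalues of $Y^z$, and set $T = x\,c(N,\delta)$. Express the density as $\rho^z(\lambda) = \pi^{-1}\lim_{\varepsilon\downarrow 0}\Im\Exp{\Tr(Y^z - \lambda - \ii\varepsilon)^{-1}}$ and invoke the SUSY representation from \cite{Cipolloni2020}: integration over the real Gaussian entries of $X$ produces a finite-dimensional integral over commuting and Grassmann variables whose integrand is of the form $e^{-N\Phi(a,\tau;\,z,\lambda)}$ times a rational prefactor. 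Because $X$ is real, the action $\Phi$ carries an extra involutive symmetry under complex conjugation not present in the complex Ginibre case, and the resulting integral has two saddles: a \emph{complex-symmetric} one (also present in the complex Ginibre analysis) and a \emph{real-symmetric} one fixed by this involution.

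Second, carry out steepest descent on the representation uniformly in $(\eta,\delta,\lambda)$. The complex-symmetric saddle contributes a portion of the density whose integral over $[0,T]$ produces the $(1+\abs{\log x})x$ term, the logarithmic correction emerging from matching the small-$\lambda$ pointwise bound with a uniform density bound on the complementary region $[T,c(N,\delta)]$. The real-symmetric saddle is separated from the complex-symmetric one by an action cost of exactly $\tfrac12 N\eta^2$, producing the prefactor $e^{-\frac12 N\eta^2}$ that multiplies its Gaussian-fluctuation contribution; integrated over $[0,T]$ this gives the $\sqrt{x}$ piece, while once $\abs\eta \gg N^{-1/2}$ the contour around the real-symmetric saddle can be deformed so as to gain an additional factor $(\sqrt{N}\abs\eta)^{-1}$, yielding the sharper alternative $x/(\sqrt{N}\abs\eta)$. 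The two scales making up $c(N,\delta)$, namely $N^{-3/2}$ for $\abs\delta \approx 0$ and $(N^2\abs\delta)^{-1}$ for $\abs\delta$ of order one, come from the change of curvature of $\Phi$ at the saddle as $|z|$ moves from the bulk to the hard edge $|z|=1$.

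Third, integrate the resulting pointwise density bounds over $[0,T]$ and add them to reproduce \cref{eq:optb}. The principal technical obstacle is the second step: the steepest-descent analysis must be performed uniformly across the full parameter range $\delta \in [-10 N^{-1/2}, 1]$, $\eta \in [-1, 1]$ and $\lambda \in [0, T]$, interpolating between the hard-edge and bulk regimes as well as between the two symmetry-class saddles. In particular, extracting the sharp exponential factor $e^{-\frac12 N\eta^2}$ relies on an exact evaluation of the action difference between the two saddles, a piece of fine algebraic structure that is naturally accessible through the SUSY formalism but appears difficult to reach with other techniques.
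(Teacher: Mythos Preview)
Your overall strategy---reduce to a resolvent-type bound and analyse the SUSY representation from \cite{Cipolloni2020}---is the paper's, but the implementation differs in two respects worth flagging.

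First, the paper does \emph{not} compute the one-point density $\rho^z(\lambda)$ pointwise. It bounds $\E\Tr[Y^z+E]^{-1}$ at a single \emph{real positive} $E$ (that is, at the spectral parameter $-E$, lying outside the spectrum of the positive matrix $Y^z$) and applies the elementary Markov bound $\Prob(\lambda_1\le E)\le 2E\,\E\Tr[Y^z+E]^{-1}$, setting $E=x\,c(N,\delta)$. Your route via $\rho^z(\lambda)=\pi^{-1}\lim_{\varepsilon\downarrow0}\Im\E\Tr(Y^z-\lambda-\ii\varepsilon)^{-1}$ would require uniform control of the resolvent on and near the spectrum for every $\lambda\in[0,T]$ and then a limit $\varepsilon\to0$; this is strictly harder than a single off-spectrum evaluation, and the SUSY formula~\cref{realsusyexplAAr} is written precisely for real positive $E$. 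Your first step $\Prob(\lambda_1\le T)\le\E\#\{\lambda_i\le T\}$ is fine, but the efficient continuation is $\E\#\{\lambda_i\le T\}\le 2T\,\E\Tr[Y^z+T]^{-1}$, which lands you back on the paper's path.

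Second, your picture of steepest descent around two isolated saddles (``complex-symmetric'' and ``real-symmetric'') with action gap exactly $\tfrac12 N\eta^2$ does not match the paper's mechanism. The representation is a triple integral over $(\xi,a,\tau)\in\Gamma\times[0,\infty)\times[0,1]$ with phase $f(\xi)-g(a,\tau,\eta)$; the proof proceeds not by locating critical points but by splitting the domain into regimes (small versus large $a,\tau,|\xi|$) and bounding each via \cite[Lemmas~5.2, 6.3]{Cipolloni2020} together with the sharpened Lemma~\ref{lem:remsmall}, whose new feature is exactly the $c(N,\delta)^{1/2}N^{1/2}|\eta|$ regularisation in the denominator of~\cref{eq:befmar}. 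The factor $e^{-\frac12 N\eta^2}$ arises not as an action difference between two saddles but directly from the $\eta$-dependent term $2\eta^2 a^2(1-\tau)/(1+2a+a^2\tau)$ in $g$, which is active only for $\tau<1$ (the variable $\tau$ being the extra bosonic degree of freedom specific to the real symmetry class). So your intuition that a real-specific contribution is suppressed by $e^{-\frac12 N\eta^2}$ is correct in spirit, but the actual argument is a regime-by-regime Laplace-type estimate rather than a two-saddle steepest descent, and the logarithm in~\cref{eq:optb} comes from the explicit bound $c(N,\delta)^{-1}[1+|\log(NE^{2/3})|]$ on the resolvent, not from a matching procedure.
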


Note that $c(N,\delta)$ is the correct scale of the typical size of $\lambda_1(Y^z)$. Indeed
the level spacing of the eigenvalues 
of $Y^z$ close to zero for $|z|<1$ is given by $N^{-2}\delta^{-1}$ and for $|z|=1$ by $N^{-3/2}$,
see~\cite[Section 5]{1907.13631}.
The  $N^{-3/2}$ scaling 
in the edge regime $|z|=1$ comes from the fact that
the density of eigenvalues of the  Hermitized matrix
\begin{equation}\label{eq:herm}
    H^z: = \begin{pmatrix}  0 &  X-z \cr  (X-z)^* & 0 \end{pmatrix}
\end{equation}
develops a cubic cusp singularity that has a natural  eigenvalue spacing $N^{-3/4}$.

\begin{remark} Introducing the coupling parameter $\gamma$ and thus  replacing
    $\lambda_1(Y^z)$ by $\lambda_1[(\gamma X-z)(\gamma X-z)^*]$ we then
    conclude a bound analogous to~\cref{eq:optb} after replacing $x$ by $x\gamma^{-2}$.
\end{remark}

\begin{remark}
    \cref{theo:bsmaleig} is proven only for matrices $X$ with Gaussian entries. However, the bound~\cref{eq:optb} can be  extended to matrices $X$ with generic independent identically distributed (i.i.d.) entries at the price of an additional error term. More precisely, for such matrices there exists $\omega>0$ such that  
    \begin{equation}
        \label{eq:aftgft}
        \Prob\big(\lambda_1(Y^z)\le x c(N,\delta)\big)\le \text{rhs.\ of~\cref{eq:optb}}+
        CN^{-\omega},
    \end{equation}
    for any $x\ge N^{-\omega}$. Given~\cref{eq:optb}, the bound in~\cref{eq:aftgft} is obtained by a standard Green function comparison (GFT) argument (see e.g.~\cite[Proposition 3]{MR4221653}).
\end{remark}

For a general deterministic matrix $A$ we can make
the following conjecture. Note that~\cref{eq:optb} proves~\cref{conj:smb} for the special case $A=-zI$ up to a logarithmic correction.

\begin{conjecture}\label{conj:smb}
    Let $X$ be an $N\times N$ real Ginibre matrix. There exist constants $c_*,C_*>0$ such that for any deterministic matrix $A$ and for any $\gamma>0$ it holds
    \begin{equation}
        \Prob\left(\sigma_1(\gamma X+A)\le \frac{\sqrt{x}}{N}\right) \le C_* \frac{x}{\gamma^2}+C_* e^{-c_*\Tr(\Im A)^2}\Bigl(\frac{\sqrt{x}}{\gamma}\wedge \frac{x}{\sqrt{\Tr(\Im A)^2}\gamma^2} \Bigr)
    \end{equation}
    where $\Im A := \frac{1}{2\ii}(A-A^\ast)$.
\end{conjecture}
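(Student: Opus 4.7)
The plan is to extend the supersymmetric analysis underlying~\cref{theo:bsmaleig} and the earlier work~\cite{Cipolloni2020} from the scalar shift $A=-zI$ to a general deterministic matrix $A$. By the homogeneity $\sigma_1(\gamma X+A)=\gamma\,\sigma_1(X+A/\gamma)$ one may reduce to $\gamma=1$ and replace $A$ by $A/\gamma$, so that the object of interest is the Hermitization
\begin{equation*}
H^A:=\begin{pmatrix}0 & X+A\\ (X+A)^* & 0\end{pmatrix}.
\end{equation*}
A standard argument bounds the lower tail of $\sigma_1(X+A)$ by the expected local density of eigenvalues of $H^A$ near zero, equivalently by $\E\Im\Tr(H^A-\ii\eta)^{-1}$ at energy scale $\eta\sim\sqrt{x}/N$, so the whole problem reduces to estimating this trace uniformly on the optimal scale.

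For the expected resolvent trace, the supersymmetric formalism yields a finite-dimensional integral over a superspace whose effective action is built from the singular data of $A$ (entering through $A^*A$) and from its Hermitian/anti-Hermitian decomposition (entering through $A\pm A^*$). In the scalar case $A=-zI$ the action depends only on $|z|$ and $\Im z$, and the $\Im z$-dependence reduces to a Gaussian integral over the auxiliary boson block, producing the sharp exponential factor $e^{-N\eta^2/2}$ in~\cref{eq:optb}. For general $A$ the analogous Gaussian integration over the anti-Hermitian component of the boson variable should produce the factor $e^{-c_*\Tr(\Im A)^2}$, since $\Im A$ enters the action linearly and the variance of the corresponding Gaussian is proportional to $\Tr(\Im A)^2$.

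A useful preliminary reduction exploits the orthogonal invariance $X\overset{d}{=}OXO^T$ for $O\in O(N)$, which brings $A$ into a real Schur-type canonical form. This eliminates a large group of nuisance parameters while preserving the two invariants on which the bound depends, namely the singular spectrum of $A$ and the scalar $\Tr(\Im A)^2$. The core step is then a saddle-point analysis on the resulting superintegral: one identifies a steepest-descent manifold whose leading contribution reproduces the two competing regimes of the conjecture, namely the universal complex-like piece $C_*x/\gamma^2$, coming from the generic local density of eigenvalues of $H^A$ near the origin, and the real-symmetry-specific piece suppressed by $e^{-c_*\Tr(\Im A)^2}$, with the minimum of the two sub-regimes arising from balancing contributions of different steepest-descent directions, exactly as in the proof of~\cref{theo:bsmaleig}.

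The main obstacle will be carrying out the saddle-point and Hessian estimates uniformly in $A$ when the singular spectrum of $A$ is either nearly degenerate or broadly spread. For $A=-zI$ all singular values of $X+A$ originate from a single shifted ensemble and the relevant saddle manifold is effectively low-dimensional, so the framework of~\cite{Cipolloni2020} applies essentially verbatim. For general $A$ the saddle manifold inherits the full structure of the singular-value decomposition of $A$, and one must prove that the Gaussian suppression coming from $\Im A$ is not compromised by contributions arising from the non-commutativity of $A$ and $A^*$. Borderline regimes — for instance when $A$ is nearly Hermitian so that $\Tr(\Im A)^2$ is small, or when $A$ has a large outlier singular value — will likely require a separate interpolation argument between the purely Hermitian and strongly non-Hermitian cases.
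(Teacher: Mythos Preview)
The statement you are attempting to prove is explicitly labeled a \emph{conjecture} in the paper and is not proven there. The paper only remarks that the special case $A=-zI$ follows (up to a logarithmic correction) from~\cref{theo:bsmaleig}; no argument is given for general $A$. So there is no ``paper's own proof'' to compare against, and your proposal should be read as a heuristic outline toward an open problem rather than a proof.

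As an outline, your reduction to $\gamma=1$ and the use of the orthogonal invariance $X\overset{d}{=}OXO^T$ are fine, but there is a genuine structural gap in the SUSY step. The superbosonization identity~\cref{eq:superbo} used in the paper applies because, after the Gaussian integration over the entries of $X$, the integrand depends on the super-vectors $s,\bar s,\chi,\bar\chi$ \emph{only} through the $4\times 4$ matrix $\Psi^*\Psi$. This reduction is precisely what collapses the $O(N)$-dimensional integral to a three-fold one, and it hinges on $A=-zI$ being a scalar: the shift enters only through $|z|^2$ and $\eta=\Im z$. For a general $A$ the post-Gaussian integrand will involve quantities like $s^*As$, $\chi^*A\chi$, $s^*A^*A s$, etc., which are \emph{not} functions of $\Psi^*\Psi$. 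Bringing $A$ to real Schur form by orthogonal conjugation does not cure this, since a nontrivial Schur form still breaks the required rotational symmetry in the $s,\chi$ variables. Consequently the superbosonization step, and hence the entire low-dimensional saddle-point framework of~\cite{Cipolloni2020} that you invoke, does not apply in any direct way.

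Your closing paragraph correctly anticipates that uniformity in the singular spectrum of $A$ is delicate, but the obstruction is more basic than a Hessian estimate: one first needs a replacement for the dimensional reduction~\cref{eq:superbo} before any saddle-point analysis can begin. Absent such a replacement (or an entirely different method, e.g.\ a geometric argument in the spirit of~\cite{MR2255338} that somehow captures the exponential gain from $\Im A$), the proposal does not constitute a proof.
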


We conclude this section by remarking that, by
$\kappa(X-z) = [\lambda_{\max }(Y^z)/\lambda_1(Y^z)]^{1/2}$,
from  a lower tail estimate~\cref{eq:optb} on $\lambda_1(Y^z)$ we immediately obtain
an upper tail bound on $\kappa(X-z)$ for \(\abs{z}<99/100\) (the complementary bound in the edge regime \(\abs{z}\approx 1\) follows similarly)
\begin{equation}\label{kappabound}
  \Prob\left(\kappa(X-z)\ge t \right)\lesssim \abs{\log t}\Bigl(\frac{N}{ t}\Bigr)^{2} 
    +e^{-\frac{1}{2}N\eta^2} \Bigl(\frac{N}{t} \wedge \frac{N^{3/2}}{\abs{\eta} t^2} \Bigr)+ e^{-N}.
\end{equation}
Here  we used that the largest eigenvalue $\lambda_{\max }(Y^z)$ can
be controlled by the large deviation bound  $\Prob (\| X\|\ge  K)\le e^{-\alpha K^2N}$
for some small $\alpha$ and any  large $K$. The bound~\cref{kappabound} shows 
Demmel's transition between the $1/t$ and $1/t^2$ tail behaviour
up to an exponentially small additive error.

\subsection{Overlaps and condition numbers for \texorpdfstring{\(X\)}{X}}
\begin{figure}[htbp]
    \centering
    \begin{tikzpicture}
        \begin{axis}[width=13cm,height=5cm, xtick={0,0.25,0.5,0.75,1}, ytick={0,.25,.5,.75,1.},xlabel=\(|z|\),axis lines=middle,xmax=1.3,
            ymin=0,ymax=1.2,%
            ]
            \addplot[col1, thick,error bars/.cd, y dir = both, y explicit,error bar style={very thick},error mark options={col1,rotate=90, thick} ] table[col sep=comma,x index=0,y index=1,y error index=2] {eOV.csv};
            \addplot[ thick,draw=col2,error bars/.cd, y dir = both, y explicit,error bar style={very thick},error mark options={col2,rotate=90, thick}] table[col sep=comma,x index=3,y index=4,y error index=5] {eOV.csv};
            \addplot [thick, domain=0:1, samples=19,gray,dashed] {1-x^2};
            \addlegendentry{\(\E[O_{ii}^{\C}/N]\)}
            \addlegendentry{\(\E\bigl[O_{ii}^{\R}/N\; \big\vert\; (\Im \lambda_i)^2\ge 10/N\bigr]\)}
            \addlegendentry{\(1-\abs{z}^2\)}
        \end{axis}
    \end{tikzpicture}
    \caption{The figure shows the empirical averaged overlap conditioned on \(|\lambda_i|=|z|\), as well as \((\Im \lambda_i)^2\ge 10/N\) in the real case, together 95\% confidence intervals, obtained from computing the eigenvalues for 10,000 Ginibre matrices of size \(100\times 100\). For the complex case, the
    corresponding behaviour \(\E \bigl[O_{ii}^\C/N\big\vert \lambda_i=z \bigr]\approx 1-|z|^2\) in the large $N$ limit has been established by Chalker and Mehlig~\cite{MR1755501,PhysRevLett.81.3367}. The figure above 
    suggests that the same relation holds true for real Ginibre matrices sufficiently far away from the real axis.}\label{exp Ov figure} 
\end{figure}
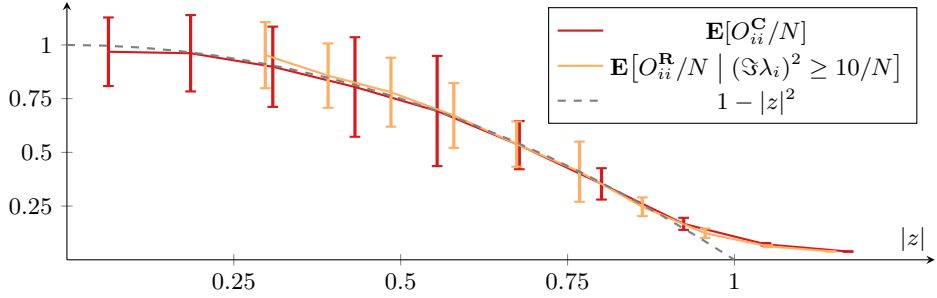

Our main result on the tail of the smallest singular value from~\cref{eq:optb} directly translates into optimal (up to logarithmic corrections) bounds on the \emph{eigenvector overlaps} and \emph{eigenvector condition number}. We denote the left- and right eigenvectors of \(X\) corresponding to an eigenvalue \(\lambda_i\) by \(L_i,R_i\) so that \(X=\sum \lambda_i L_i R_i^\ast\) with the normalization \(\braket{L_i,R_i}=1\) customary
in the theory of non-normal matrices. The \emph{diagonal eigenvector overlap} \(O_{ii}\) or \emph{eigenvalue condition number} \(\kappa(\lambda_i)\) are defined as
\begin{equation}
    O_{ii}:=\norm{L_i}^2\norm{R_i}^2=:\kappa(\lambda_i)^2.
\end{equation} 
The overlap \(O_{ii}\) is directly related to the stability of the eigenvalue \(\lambda_i=\lambda_i(X)\) 
under perturbations in the sense that 
\begin{equation}
    \lim_{\epsilon\to 0} \sup_{\norm{E}\le 1} \frac{\abs{\lambda_i(X+\epsilon E)-\lambda_i}}{\epsilon} = \sqrt{O_{ii}}. 
\end{equation}
\begin{theorem}\label{theo overlap}
    Let \(X\) be a real Ginibre matrix with left and right eigenvectors \(L_i,R_i\) corresponding 
    to the eigenvalue $\lambda_i$.
    Then      for any open set $\Omega\subset \DD$  and any $K>0$ we have 
    \begin{equation}\label{overlap upper bound}
        \Prob\Bigg( \sum_{i: \lambda_i\in\Omega} \norm{L_i}^2\norm{R_i}^2 \ge t(\log N)
        N^2 \int_{\Omega+ B(0, N^{-1/2}) }
        (1-\abs{z}^2)_+ \dif^2z \Bigg)\le CKt^{-1} + \frac{C_K}{N^{K}}
    \end{equation}
    for any $t>0$, 
    with some universal constant $C$ and $K$-dependent constants $C_K$.
    Here $\Omega+ B(0, N^{-1/2})$ denotes the Minkowski sum 
    of $\Omega$ with the ball of radius $N^{-1/2}$.
\end{theorem}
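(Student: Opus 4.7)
The plan is a three-step reduction that turns the probabilistic bound on \(\sum_i O_{ii}\) into the deterministic singular value tail estimate of~\cref{theo:bsmaleig}, followed by a Markov inequality. The basic idea is that near each eigenvalue \(\lambda_i\), the smallest singular value of \(X-z\) contracts by the overlap factor, \(\sigma_1(X-z)^2\approx|z-\lambda_i|^2/O_{ii}\), so that a suitably regularized resolvent integrated over a small neighborhood of \(\Omega\) literally reproduces \(\sum O_{ii}\) up to constants.

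Concretely, the first step is to establish a deterministic master inequality. With a resolution scale \(\eta\sim N^{-1}\) the expansion
\begin{equation*}
    \Tr\bigl[(X-z)^{-1}(X-z)^{-*}\bigr]=\sum_{i,j}\frac{\braket*{L_i,L_j}\braket*{R_j,R_i}}{(\lambda_i-z)\overline{(\lambda_j-z)}}
\end{equation*}
is dominated by its \(i=j\) part near each \(\lambda_i\), giving \(\sum_k(\sigma_k^2+\eta^2)^{-1}\gtrsim O_{ii}/(|z-\lambda_i|^2+\eta^2 O_{ii})\). Integrating the square of this identity in \(z\) over a ball of radius \(r\gtrsim\sqrt{O_{ii}}\,\eta\simeq N^{-1/2}\) (on the event that \(O_{ii}\lesssim N^{1+\varepsilon}\) uniformly) and summing in \(i\) yields, after accounting for overlapping balls by a constant factor,
\begin{equation*}
    \sum_{\lambda_i\in\Omega}O_{ii}\le C\int_{\Omega+B(0,N^{-1/2})}\sum_{k}\frac{\eta^2}{(\sigma_k(X-z)^2+\eta^2)^2}\dif^2z
\end{equation*}
on a good event \(A_K\) of probability \(\ge 1-C_K N^{-K}\) on which the operator norm of \(X\) is \(O(1)\), all eigenvalues lie in \(\DD+B(0,N^{-1/2})\), and all overlaps are polynomially bounded.

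The second step estimates the expectation of the integrand. Since for \(k\ge 2\) the singular values satisfy \(\sigma_k^2\gtrsim k/N\) with overwhelming probability, only \(k=1\) contributes. Writing \(\E[\eta^2/(\sigma_1^2+\eta^2)^2]\) via the layer-cake formula and plugging in~\cref{eq:optb} with the scale \(c(N,\delta)\sim 1/(N^2\delta)\) in the bulk and \(c(N,\delta)\sim N^{-3/2}\) at the edge, a careful integration of the resulting logarithm-times-linear tail gives the uniform bound
\begin{equation*}
    \E\Biggl[\sum_k\frac{\eta^2}{(\sigma_k(X-z)^2+\eta^2)^2}\Biggr]\lesssim (\log N)\cdot N^2(1-\abs{z}^2)_+
\end{equation*}
for all \(z\in\DD+B(0,N^{-1/2})\) covered by~\cref{theo:bsmaleig}. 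Combining this with Step 1 via Fubini produces the key expectation bound
\begin{equation*}
    \E\biggl[\mathbbm{1}_{A_K}\sum_{\lambda_i\in\Omega}O_{ii}\biggr]\le C(\log N)\, N^2\int_{\Omega+B(0,N^{-1/2})}(1-\abs{z}^2)_+\dif^2z,
\end{equation*}
and Markov's inequality applied on \(A_K\), together with the bound \(\Prob(A_K^c)\le C_K N^{-K}\) for the complement, yields the stated tail~\cref{overlap upper bound} with the \(K\)-dependence absorbed into the two error terms.

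The main obstacle is the master inequality of Step 1 in settings where eigenvalues cluster on the scale \(N^{-1/2}\) or approach the real axis, since the single-eigenvalue expansion \(\sigma_1^2\approx|z-\lambda_i|^2/O_{ii}\) must be replaced by a block-local analysis (effectively a \(2\times 2\) Jordan-like reduction). This is handled by sacrificing a multiplicative constant and using the full sum \(\sum_k\) in the integrand, which is why only the universal \(C\) appears in front of \(t^{-1}\). A secondary difficulty is the edge regime, where \(c(N,\delta)\) changes scaling; the necessary matching is precisely what requires the full uniform statement of~\cref{theo:bsmaleig} rather than just its bulk specialization.
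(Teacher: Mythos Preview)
Your high-level plan coincides with the paper's: bound \(\sum_{\lambda_i\in\Omega} O_{ii}\) by an integral over a neighbourhood of \(\Omega\) of a functional of the singular values of \(X-z\), estimate its expectation via~\cref{theo:bsmaleig}, and conclude by Markov on a high-probability good event. The paper uses the sharp pseudo-spectrum area \(\epsilon^{-2}\abs{\{z:\sigma_1(X-z)\le\epsilon\}}\) in place of your smoothed \(\int \sum_k\eta^2(\sigma_k^2+\eta^2)^{-2}\dif^2z\), but the two are essentially equivalent.

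There is, however, a genuine gap in Step~1. For your master inequality you need the balls \(B(\lambda_i,\sqrt{O_{ii}}\,\eta)\) to be pairwise disjoint and to sit inside \(\Omega+B(0,N^{-1/2})\), and for the lower bound \(\sigma_1(X-z)\lesssim\abs{z-\lambda_i}/\sqrt{O_{ii}}\) to hold on each ball you must also subtract off the contribution of the other eigenvalues in the spectral expansion. With your choice \(\eta\sim N^{-1}\) all of this forces \(\max_i O_{ii}\lesssim N^{1+\varepsilon}\) on the good event \(A_K\), and you assert \(\Prob(A_K)\ge 1-C_KN^{-K}\). That is not achievable: already for complex Ginibre the rescaled overlap \(O_{ii}/N\) has a heavy inverse-\(\gamma_2\) tail, so a union bound over \(N\) eigenvalues gives only \(\Prob(\max_i O_{ii}>tN)\lesssim N/t^2\), and pushing this below \(N^{-K}\) requires allowing \(\max_i O_{ii}\) of order \(N^{(K+3)/2}\). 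For real Ginibre near the real axis the tail is heavier still. So the event you postulate in Step~1 does not have the probability you need, and the argument does not close.

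The paper resolves this by decoupling the two scales. It first proves, via a coarse grid argument that itself uses only~\cref{theo:bsmaleig}, the weak a~priori bounds \(\Delta>N^{-3K}\) and \(\max_i O_{ii}\le N^{12K}\) with probability \(1-CN^{-K}\) (Lemmas~4.1--4.2). It then takes the resolution scale to be \(\epsilon=N^{-12K}\), so that the ball radii \(\sqrt{O_{ii}}\,\epsilon/2\le N^{-6K}\) are automatically disjoint and contained in \(\Omega+B(0,N^{-1/2})\). The price is that at this tiny scale a pointwise bound as in your Step~2 no longer holds on the real axis (the \(\sqrt{x}\)-term in~\cref{eq:optb} contributes a factor \(\epsilon^{-1}\)), so the paper does not claim a uniform-in-\(z\) estimate but instead integrates \(\epsilon^{-2}\Prob(\sigma_1(X-z)\le\epsilon)\) over \(z\), splitting into regimes according to \(\abs{\Im z}\) and \(\abs{1-\abs{z}^2}\) and exploiting the Gaussian factor \(e^{-N(\Im z)^2/2}\). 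In short: your scale \(N^{-1}\) makes Step~2 easy but Step~1 impossible; the paper's scale \(N^{-12K}\) makes Step~1 work but forces a more delicate Step~2. The missing ingredient in your outline is the weak a~priori overlap bound that permits the small scale.
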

\begin{figure}[htbp]
    \centering
    \begin{tikzpicture}
        \begin{axis}[width=13cm,height=8cm,%
            xmode=log,ymode=log,xlabel=\(t\)
            ]
            \addplot[col2,very thick,mark=none] table[col sep=comma,x index=0,y index=1] {ovDecay.csv};
            \addplot[col3,very thick,mark=none] table[col sep=comma,x index=2,y index=3] {ovDecay.csv};
            \addplot[col4,very thick,mark=none] table[col sep=comma,x index=4,y index=5] {ovDecay.csv};
            \addlegendentry{\(\mathrm{CCDF}[O_{ii}^{\C}/N]\)}
            \addlegendentry{\(\mathrm{CCDF}\bigl[O_{ii}^{\R}/N\; \big\vert\; (\Im \lambda_i)^2\le 0.1/N\bigr]\)}
            \addlegendentry{\(\mathrm{CCDF}\bigl[O_{ii}^{\R}/N\; \big\vert\; (\Im \lambda_i)^2\ge 10/N\bigr]\)}
            \addplot [thick, domain=2:100, samples=19,dotted] {4.0*x^(-1.9)};
            \addplot [thick, domain=2:1000, samples=19,dashed] {1.9*x^(-0.9)};
            \addlegendentry{\(4.0t^{-1.9}\)}
            \addlegendentry{\(1.9t^{-0.9}\)} 
        \end{axis}
    \end{tikzpicture}
    \caption{The complementary cumulative distribution functions \(\mathbf P(O_{ii}/N\ge t)\) for the eigenvector overlaps of real and complex Ginibre matrices obtained from 10,000 matrices of size \(100\times 100\). The complex overlaps as well as the real overlaps away from the real axis share the same decay exponent of \(1.9\approx 2\), consistent with the fact~\cite{MR4095019} that in the complex case the overlaps are \(1/\gamma_2\)-distributed. The fatter tail of the real overlap close to the real axis is responsible for the fact~\cite{MR3851824} that \(\mathbf E[O_{ii}^\R\vert \lambda_i\in\R]=\infty\).}\label{ov decay figure} 
\end{figure}
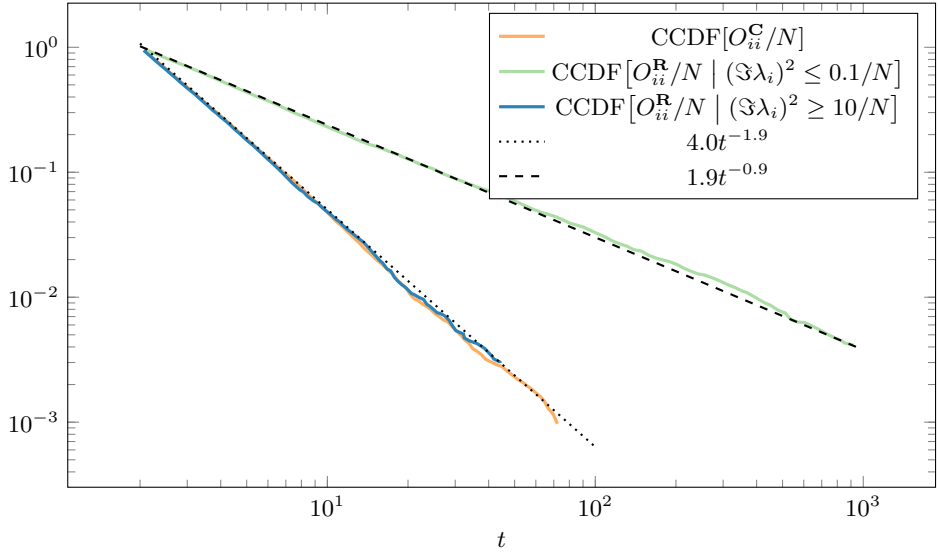 
\begin{remark}\leavevmode
    \begin{enumerate}[label=(\roman*)]
        \item For complex Ginibre matrices Chalker and Mehlig~\cite{MR1755501,PhysRevLett.81.3367} computed the expected overlap (rigorously only for \(z=0\)) and showed for its conditional expectation that 
        \begin{equation} 
            \Exp*{  O_{ii}^\C \given \lambda_i=z}=
            \Exp*{\norm{L_i}^2\norm{R_i}^2\given \lambda_i=z} \sim N(1-\abs{z}^2).
        \end{equation} 
        See~\cref{exp Ov figure} for a comparison between
        the      real Ginibre overlaps far away from the real 
        axis  and the
        complex Ginibre overlaps as a function of $|z|$.
        \item In the case of complex Ginibre matrices the distribution of individual overlaps \(\norm{L_i}^2\norm{R_i}^2\) has been identified in~\cite{MR4095019,MR3851824}, showing that 
        \[N^{-1}(1-\abs{\lambda_i}^2)^{-1}\norm{L_i}^2\norm{R_i}^2\] 
        converges in distribution to an inverse \(\gamma_2\) random variable. 
        \item Similarly, in the case of real Ginibre matrices the joint distribution of overlaps and their corresponding real eigenvalues has been identified in~\cite{MR3851824} via supersymmetric techniques. See~\cref{ov decay figure} for the tail decay of the $\mathrm{CCDF}$ of the overlaps corresponding to real and complex eigenvalues in the real Ginibre ensemble. Our numerics reproduce the $t^{-1}$ decay for the \(\mathrm{CCDF}\) of overlaps
        corresponding to real eigenvalues as shown in~\cite[Eq.~(2.2)]{MR3851824}.
        \item Recently, suboptimal versions of~\cref{overlap upper bound} could be established
        for the more general case of non-Hermitian random matrices $X$ with i.i.d.\ entries with deterministic 
        additive deformation \(X+A\) in~\cite{2005.08930} and~\cite{2005.08908},
        resulting in bounds of order \(N^5\) and \(N^3\), respectively. 
        \item Assuming~\cref{conj:smb} holds we expect that near-optimal versions of~\cref{overlap upper bound} can be established for general perturbations \(A+ X\) with real Ginibre matrices \(X\), potentially extending previous results~\cite{1906.11819} with complex Ginibre regularisation.
    \end{enumerate}
\end{remark}
Eigenvector overlaps can be used to estimate the \emph{eigenvector condition number}\footnote{The proof of the simple inequality can be found e.g.\ in~\cite[Lemma 3.1]{1906.11819}} 
\begin{equation}\label{kappa v}
    \kappa_v(X):=\inf_{VXV^{-1}=D} \norm{V}\norm{V^{-1}} \le \sqrt{N}\sqrt{\sum_{i=1}^N \norm{L_i}^2\norm{R_i}^2},
\end{equation}
where \(D\) is the diagonal matrix of eigenvalues. Thus we immediately obtain the following corollary.
\begin{corollary}\label{cor kappa v}
    For real Ginibre matrices \(X\) we have  
    \begin{equation}  
        \Prob\Big( \kappa_v(X) \ge t N^{3/2}\sqrt{\log N} \Big)\le CKt^{-2} +\frac{C_K}{N^K}, \qquad t>0,
    \end{equation}
    for any $K>0$    with some universal constant $C$ and $K$-dependent constants  $C_K$.
\end{corollary}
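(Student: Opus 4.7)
The plan is to combine the deterministic inequality~\cref{kappa v}, which gives $\kappa_v(X)^2 \le N \sum_i \norm{L_i}^2\norm{R_i}^2$, with the probabilistic tail bound on the total overlap coming from~\cref{theo overlap} applied to a set $\Omega$ that effectively contains all the eigenvalues.

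First I would apply~\cref{overlap upper bound} with $\Omega = \DD$. Since the weight $(1-\abs{z}^2)_+$ vanishes outside the closed unit disk, the $N^{-1/2}$-Minkowski enlargement in the integrand is irrelevant, and the spatial integral reduces to the explicit constant $\int_\DD (1-\abs{z}^2)\dif^2 z = \pi/2$. This yields, for every $s,K>0$,
\begin{equation*}
    \Prob\Bigl(\sum_{i:\,\lambda_i\in\DD} \norm{L_i}^2\norm{R_i}^2 \ge \tfrac{\pi}{2}\, s\,(\log N)\,N^2\Bigr)\le CKs^{-1}+C_K N^{-K}.
\end{equation*}
Next I would observe that all eigenvalues of $X$ lie in $\DD$ except on an event of exponentially small probability: by the large-deviation bound $\Prob(\|X\|\ge K')\le e^{-\alpha K'^2 N}$ already invoked in the derivation of~\cref{kappabound}, we have $\Prob(\exists\, i\colon \lambda_i\notin\DD)\le Ce^{-cN}$. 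Consequently, outside this exceptional event, the partial sum above coincides with $\sum_{i=1}^N \norm{L_i}^2\norm{R_i}^2$, and the exceptional probability can be absorbed into the $C_K N^{-K}$ error term.

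Finally I would insert this into~\cref{kappa v}. Squaring that inequality shows $\{\kappa_v(X)\ge t N^{3/2}\sqrt{\log N}\}\subset\{\sum_i\norm{L_i}^2\norm{R_i}^2\ge t^2 N^2\log N\}$, and the choice $s = 2t^2/\pi$ in the above tail bound converts the $s^{-1}$ decay into the desired $t^{-2}$ decay, yielding precisely the stated inequality after renaming of the universal constant. There is no real obstacle in this argument: the quadratic-in-$t$ improvement over the linear tail in~\cref{theo overlap} is simply the algebraic consequence of the fact that $\kappa_v(X)$ scales like the square root of the total diagonal overlap.
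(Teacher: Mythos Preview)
Your overall strategy --- apply~\cref{theo overlap} with \(\Omega=\DD\), note that the weight \((1-\abs{z}^2)_+\) reduces the spatial integral to \(\pi/2\), and then square~\cref{kappa v} to convert the \(s^{-1}\) tail on the total overlap into the stated \(t^{-2}\) tail on \(\kappa_v(X)\) --- is exactly what the paper has in mind when it calls the corollary ``immediate''.

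There is, however, a real error in your treatment of eigenvalues outside \(\DD\). The bound \(\Prob(\|X\|\ge K')\le e^{-\alpha K'^2 N}\) concerns the \emph{operator norm} (typically close to \(2\) for Ginibre), not the spectral radius, and in any case does not force the latter below \(1\). In fact the spectral radius of a Ginibre matrix is typically \(1+c\sqrt{(\log N)/N}>1\), so \(\Prob(\exists\,i\colon \lambda_i\notin\DD)\) does not even tend to zero; your claimed bound \(Ce^{-cN}\) is false, and the passage from \(\sum_{\lambda_i\in\DD}O_{ii}\) to \(\sum_{i=1}^N O_{ii}\) is unjustified. The natural repair --- which the paper also leaves implicit --- is to run the argument behind~\cref{theo overlap} on a disk of radius \(1+C\sqrt{K(\log N)/N}\): the underlying singular-value bound~\cref{eq:optb} extends to this slightly larger region (the constant \(10\) in the hypothesis \(\delta\ge -10N^{-1/2}\) is not essential), the integral of \((1-\abs{z}^2)_+\) is unaffected, and the spectral-radius tail \(\Prob(\abs{\lambda_{\max}}\ge 1+t)\lesssim Ne^{-cNt^2}\) then places all eigenvalues inside the enlarged disk except on an event of probability \(\le C_KN^{-K}\).
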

We note that it is generally expected that for dense random matrices \(\kappa_v(X)\) scales linearly in \(N\), c.f.~\cite[Page 338]{MR2155029}. Therefore our~\cref{cor kappa v} is still an overestimate by
a factor  \(N^{1/2}\sqrt{\log N}\), even though the estimate in~\cref{overlap upper bound} seems optimal. This is essentially due to the fact the ultimate inequality in~\cref{kappa v} loses a factor of \(\sqrt{N}\) by estimating operator norms by Frobenius norms. 

\subsection{Implications for numerical analysis}
Our results on the condition number have direct implications for the running time of various algorithms from numerical analysis. The conjugate gradient (CG) algorithm is commonly used for solving positive-definite linear systems~\cite{MR0060307}. Given an iid.\ random matrix \(X\) and a random vector \(\bm b\) the running time of the CG algorithm for solving the positive definite linear system \(X^\ast X\bm x=\bm b\) has empirically been shown to be universal~\cite{MR3276499} with respect to the distribution of the input data \(X\) (for rectangular matrices with real Gaussian and Bernoulli distributed entries). Theoretical bounds for the convergence rate of the CG algorithm for solving \(A\bm x=\bm b\), \(A>0\) in terms of the condition number are given by~\cite{MR3495481}
\begin{equation}
    \norm{\bm x_k-\bm x}\le 2\Bigl(\frac{\sqrt{\kappa(A)}-1}{\sqrt{\kappa(A)+1}}\Bigr)^{k}\norm{\bm x_0-\bm x},
\end{equation}
where \(\bm x_k\) is the \(k\)-th iterate of the CG algorithm. 
In~\cref{hist figure,CG tail fig} we study the running time of the CG algorithm for random matrices of the form \((X-z)^\ast(X-z)\) with real or complex shift \(z\), and \(X\) with real/complex Gaussian or Bernoulli entries. In agreement to our results on the condition number we find that real matrices with real shift lead to longer running times compared with real matrices with complex shift, or complex matrices with any shift.  
\begin{figure}[htbp]
    \centering  
    \begin{tikzpicture} 
        \begin{axis}[width=12cm,height=3cm,ymax=0.24, ytick={0,0.1,0.2},xmajorticks=false,axis lines=middle
            ]
            \addplot[ybar,bar width=.5,fill=black!10,bar shift=0.0,area legend] table[col sep=comma,x index=0,y index=1] {CG.csv};
            \addplot[ybar,bar width=.5,fill=black!30,bar shift=0.5,area legend] table[col sep=comma,x index=0,y index=2] {CG.csv};
            \addlegendentry{\(X_{\mathrm{Gin}(\R)}-1/2\)};
            \addlegendentry{\(X_{\mathrm{Gin}(\R)}-\ii/2\)};
        \end{axis}
    \end{tikzpicture}
    \begin{tikzpicture}
        \begin{axis}[width=12cm,height=3cm, ymax=0.24, ytick={0,0.1,0.2},xmajorticks=false,axis lines=middle
            ]  
            \addplot[ybar,bar width=.5,fill=black!10,bar shift=0.0,area legend] table[col sep=comma,x index=0,y index=3] {CG.csv};
            \addplot[ybar,bar width=.5,fill=black!30,bar shift=0.5,area legend] table[col sep=comma,x index=0,y index=4] {CG.csv};
            \addlegendentry{\(X_\mathrm{Ber}-1/2\)};
            \addlegendentry{\(X_\mathrm{Ber}-\ii/2\)};
        \end{axis}
    \end{tikzpicture}
    \begin{tikzpicture}
        \begin{axis}[width=12cm,height=3cm,ymax=0.24, ytick={0,0.1,0.2},xtick={140,160,180,200},axis lines=middle
            ]
            \addplot[ybar,bar width=1,fill=black!10,bar shift=0.0,area legend] table[col sep=comma,x index=0,y index=5] {CG.csv};
            \addlegendentry{\(X_{\mathrm{Gin}(\C)}-z,\;\abs{z}=1/2\)};
        \end{axis}
    \end{tikzpicture}
    \caption{The figure shows the distribution of the running time of the CG algorithm for solving the linear system \((X-z)(X-z)^\ast \bm x=\bm b\) for different choices of \(z\) and distributions of \(X\). The distributions have been obtained by sampling \(32\,000\) random matrices \(X\) of size \(100\times 100\) with independent (a) real Gaussian, (b) Bernoulli, and (c) complex Gaussian entries, and random vectors \(\bm b\) with iid uniformly distributed entries. The random matrices have been scaled such that their empirical spectrum is approximately uniformly distributed in the unit disc, and the random vectors \(\bm b\) have been normalised by their Euclidean norm. The horizontal axis shows the number of steps in the CG algorithm to reach a tolerance of \(10^{-8}\). While for complex random matrices \(X\) we observe no difference between real and complex shift, we find that for real random matrices both mean and fluctuation of the running time are smaller for the complex shift \(X-\ii/2\) than for the real shift \(X-1/2\). We note that the observed runtimes seem to be significantly influenced by rounding errors as the CG algorithm in exact arithmetic is guaranteed terminate in at most \(N=100\) steps~\cite{MR978581,MR1146656}.}\label{hist figure} 
\end{figure}
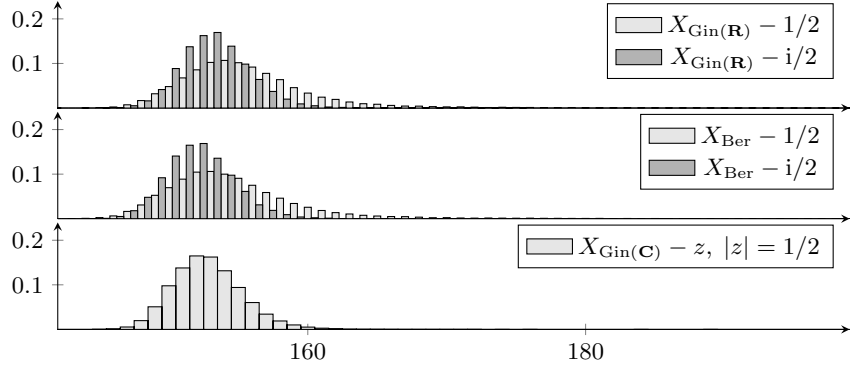
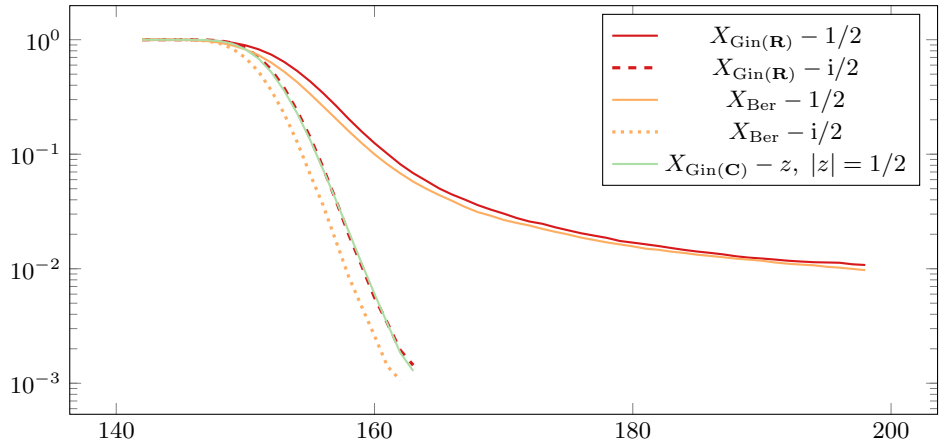
\begin{figure}[htbp]
    \centering
    \begin{tikzpicture}
        \begin{axis}[width=13cm,height=7cm,xtick={140,160,180,200},
            ymode=log
            ]
            \addplot[col1,thick,mark=none] table[col sep=comma,x index=0,y index=1] {CGtail.csv};
            \addplot[col1,very thick,mark=none,dashed] table[col sep=comma,x index=0,y index=2] {CGtail.csv};
            \addplot[col2,thick,mark=none] table[col sep=comma,x index=0,y index=3] {CGtail.csv};
            \addplot[col2,very thick,mark=none,dotted] table[col sep=comma,x index=0,y index=4] {CGtail.csv};
            \addplot[col3,thick,mark=none] table[col sep=comma,x index=0,y index=5] {CGtail.csv};
            \addlegendentry{\(X_{\mathrm{Gin}(\R)}-1/2\)};
            \addlegendentry{\(X_{\mathrm{Gin}(\R)}-\ii/2\)};
            \addlegendentry{\(X_\mathrm{Ber}-1/2\)};
            \addlegendentry{\(X_\mathrm{Ber}-\ii/2\)};
            \addlegendentry{\(X_{\mathrm{Gin}(\C)}-z,\;\abs{z}=1/2\)};
        \end{axis}
    \end{tikzpicture}
    \caption{The figure shows the tail behaviour of the CCDF of the running times from~\cref{hist figure}. It is evident that the running times for real matrices with real shift has a significantly heavier tail than those for real matrices with complex shift, or complex matrices with any shift.}\label{CG tail fig} 
\end{figure}

\section{Proof of~\cref{theo:bsmaleig}}\label{sec:1p1pf}

The proof of~\cref{theo:bsmaleig} relies on the following triple integral representation for
the expected trace of the resolvent $[Y^z+E]^{-1}= [Y^z+E\cdot I]^{-1}$
at any positive number $E>0$ (cf.~\cite[Eqs. (34)-(37)]{Cipolloni2020}):
\begin{equation}
    \label{realsusyexplAAr}
    \E \Tr   [Y^z+E]^{-1}=\frac{N}{4\pi \ii} \oint_\Gamma \dif \xi \int_0^{+\infty} \dif a\int_0^1 \dif \tau\frac{\xi^2a}{\tau^{1/2}} e^{N[f(\xi)-g(a,\tau,\eta)]} G_N(a,\tau,\xi,z),
\end{equation}
where $\Gamma$ any counter-clockwise contour around zero not crossing or encircling $-1$. Here
the functions $f$ and $g$ are given explicitly as
\begin{equation}
    \label{eq:fffr}
  f(\xi)=  f(\xi,E):=  E\xi+\log (1+\xi)-\log \xi-\frac{|z|^2}{1+\xi},      
\end{equation}
\begin{equation}
    \begin{split}
        \label{bosonphfAr}
      g(a,\tau,\eta)=  g(a,\tau,\eta,E)&:=  Ea+\frac{1}{2}\log [1+2a+a^2\tau]-\log a-\frac{1}{2}\log \tau\\
        &\quad -\frac{|z|^2(1+a)-2\eta^2 a^2 (1-\tau)}{1+2a+a^2\tau},
    \end{split}
\end{equation} 
where we denoted $\eta:=\Im z$. Furthermore, the function $G_N=G_N(a,\tau,\xi,\eta)$
is given by
\begin{equation}
    \label{eq:newbetG}
    \begin{split}
        G_N:= {}&\Bigl(
        N^2\frac{p_{2,0,0}}{a^2 \xi ^2 (\xi +1)^2 \tau }-N\frac{ p_{1,0,0}}{a^2 \xi ^2 (\xi +1) \tau  }+\delta  N^2\frac{p_{2,0,1}}{a \xi  (\xi +1)^2 \tau  }- N \delta \frac{p_{1,0,1}}{a \xi  (\xi +1) \tau  } \\
        &\quad + N^2\delta ^2\frac{ p_{2,0,2}}{(\xi +1)^2}  
        +  N^2\eta ^2 \frac{p_{2,2,0}}{a \xi  (\xi +1)^3 \tau }-N\eta ^2 \frac{p_{1,2,0}}{a \xi  \tau  }+  N^2\eta ^2\delta \frac{p_{2,2,1}}{(\xi +1) } 
        \Bigr) \\
        &\quad \times \Bigl((a^2\tau+2a+1)^2(\xi+1)^2\Bigr)^{-1},
    \end{split}
\end{equation}
where
\(\delta:= 1 -\abs{z}^2\) and 
 \(p_{i,j,k}=p_{i,j,k}(a, \tau, \xi)\) are explicit polynomials in \(a,\tau,\xi\) whose precise
form is not particularly relevant so we defer listing them to~\cref{appendix poly}.
 The indices $i,j,k$ in the definition of $p_{i,j,k}$ indicate the $N$, $\eta$ and $\delta$ powers, respectively.

The formula~\cref{realsusyexplAAr} looks somewhat complicated, but it is especially well suited 
for an accurate asymptotic analysis in the large $N$ regime via contour deformations and  Laplace asymptotics.
Note a remarkable reduction in the number of integration variables: while the expectation in the lhs.\ of~\cref{realsusyexplAAr}
involves $N^2$ real integrations, the rhs.\ is a three-fold integral.

We derived this formula  in~\cite[Section 3]{Cipolloni2020} using supersymmetric (SUSY) methods.
We will not repeat here the entire derivation of~\cref{realsusyexplAAr}, but we explain the main steps 
by  giving  a very short glimpse into SUSY.
The interested reader can find the detailed and self-contained proof of~\cref{realsusyexplAAr} in~\cite[Section 3.4]{Cipolloni2020}.

A fundamental identity on which SUSY methods rely is the following integral representation for the trace of the resolvent of 
any $N\times N$ Hermitian matrix $H$ for any spectral parameter\footnote{The imaginary part \(\Im w>0\) of the spectral parameter is a regularisation which can be removed later} $w\in\mathbf{C}$ with $\Im w> 0$:
\begin{equation}
\label{eq:firstintrep}
\mathrm{Tr}(H-w)^{-1}= \ii \int \braket{\chi,\chi} e^{-\ii\mathrm{Tr}[H-w](ss^*+\chi\chi^*)}, \qquad \int:=\int_{\mathbf{C}^N}\,\dif s\,\partial_\chi.
\end{equation}
We now explain the individual components in this formula.
 Here $s\in\mathbf{C}^N$ is a standard complex vector and $\int_{\mathbf{C}^N} \dif s$ is the usual $N$-fold complex area integral
 of the entries of $s$, e.g. $\dif s_1= \pi^{-1} \dif \Re s_1 \dif \Im s_1$.
 The letter
   $\chi$ denotes the  column vector with entries $\chi_1,\dots,\chi_N$, while 
   $\chi^*$ denotes the row vector with entries $\overline{\chi_1},\dots,\overline{\chi_N}$,
   where the collection of $2N$ symbols $\{ \chi_i, \overline{\chi_i} \,: \, i=1,2,\ldots, N\}$ consists of independent
 Grassmannian variables, i.e.\ they are $2N$ non-commuting algebraic  variables satisfying
\[
\chi_i\chi_j=-\chi_j\chi_i, \qquad \chi_i\overline{\chi_j}=-\overline{\chi_j}\chi_i,\qquad \overline{\chi_i}\overline{\chi_j}=-\overline{\chi_j}\overline{\chi_i}
\]
(the overline does not indicate any complex conjugation). 
These variables  naturally generate a $2^{2N}$-dimensional algebra over the complex scalar field.
For Grassmannian vectors $\chi,\phi$ we define the "scalar product"
\[
\braket{\chi,\phi}:=\sum_{i=1}^N\overline{\chi_i}\phi_i.
\]
Beyond polynomials, one may define analytic functions of 
Grassmannian variables via  power series, but notice that any polynomial of degree higher than $2N$ vanishes
since $\chi_i^2=0$, hence in practice any analytic function is a polynomial. For example 
\[\exp{(2\chi_1 + 3\chi_2\chi_3)}=
1 + 2\chi_1 + 3\chi_2 \chi_3+ \frac{1}{2} (2\chi_1 + 3\chi_2\chi_3)^2 = 1 + 2\chi_1 + 3\chi_2\chi_3 + 6\chi_1\chi_2\chi_3.
\]
One may also define a concept of integration (in the  sense of Berezin~\cite{MR914369}) over Grassmannian variables, which 
may be expressed equivalently  via introducing the (formal) derivatives
\[
\partial_{\chi_i}\chi_i=\partial_{\overline{\chi_i}}\overline{\chi_i}=1, \qquad \partial_{\chi_i}1=\partial_{\overline{\chi_i}}1=0, \qquad \partial_\chi:=\partial_{\chi_1}\partial_{\overline{\chi_1}}\dots \partial_{\chi_N}\partial_{\overline{\chi_N}}
\]
and extend them by multilinearity to all finite combinations of monomials in Grassmannians.

The integral representation in~\cref{eq:firstintrep} 
consists of $N$ complex integrals and $2N$ Grassmann integrals.
To heavily reduce the number of integrals, and obtain~\cref{realsusyexplAAr}, we rely on another
key ingredients of SUSY calculus, the
\emph{superbosonization formula}~\cite[Eq. (1.13)]{MR2430637}. 
We will not write it up here in full generality, but only in the form we need.

We now choose $H=Y^z$ in~\cref{eq:firstintrep} and define the $N\times 4$ matrix $\Psi:=(s,\overline{s},\chi,\overline{\chi})$. 
The main advantage of the r.h.s.\ of~\cref{eq:firstintrep}  over its l.h.s.\ is that Gaussian expectation
of a quadratic function (as $\Tr Y^z$)  in the exponent can be directly computed.
Taking the expectation of~\cref{eq:firstintrep}, and performing the Gaussian integration for the entries of $Y^z$, we are left with an integrand that turns out to be a meromorphic function depending on the variables
 only via  $\Psi^*\Psi$, which is the  $4\times 4$ matrix consisting of all the scalar products 
 of the four vectors $s, \overline{s},\chi,\overline{\chi}$.  This special form of the resulting function allows us
 to  use the superbosonization formula:
\begin{equation}
\label{eq:superbo}
\begin{split}
\int F(\Psi^*\Psi)={}&\int_Q \mathrm{SDet}^{N/2}(Q)F(Q), \qquad Q:=\left( \begin{matrix} x & \sigma \\ \tau & y\end{matrix}\right) \\
\int_Q:={}&\frac{1}{(2\pi)^2\ii}\int \dif x\oint\dif y\partial_\sigma \left(\frac{\mathrm{det}(y)}{\mathrm{det}(x)}\right)^{1/2} \mathrm{det}\left(1-\frac{x^{-1}}{y}\sigma\tau\right)^{1/2},
\end{split}
\end{equation}
for any meromorphic $F$ (see~\cite[Appendix A]{Cipolloni2020} for the extension of the superbosonization formula to meromorphic functions). Here $\mathrm{SDet}$ denotes the \emph{superdeterminant}, which is defined by
\[
\mathrm{SDet}\left( \begin{matrix} x & \sigma \\ \tau & y\end{matrix}\right):=\frac{\mathrm{det}(x)}{\mathrm{det}(y-\tau x^{-1}\sigma)}.
\]
The matrix $Q$ in~\cref{eq:superbo} is a $4\times 4$ matrix written 
as a  $2\times 2$ block matrix of $2\times 2$ blocks:  $x$ is non-negative Hermitian with $x_{11}=x_{22}$, $y$ is a scalar multiple of the identity matrix.
The off-diagonal block $\sigma$ is a $2\times 2$ matrix with fresh Grassmannian entries and $\tau$  is given by 
\[
\tau:=-\left( \begin{matrix} 0 & -1 \\ 1 & 0\end{matrix}\right)\sigma^t\left( \begin{matrix} 0 & 1 \\ 1 & 0\end{matrix}\right).
\]
In~\cref{eq:superbo}  $\int \dif x$ denotes the integral over the Lebesgue measure on non-negative Hermitian matrices with
the additional constraint  $x_{11}=x_{22}$, $\oint\dif y$ denotes the contour integral over $|y|=1$ in a counter-clockwise direction
(as before, we identified the scalar multiple of the identity matrix $y$ with the corresponding scalar)
and $\partial_\sigma:=\partial_{\sigma_{11}}\partial_{\sigma_{22}}\partial_{\sigma_{21}}\partial_{\sigma_{12}}$
denote the Grassmann derivatives.

Notice that the superbosonisation formula~\cref{eq:superbo} entails a drastic reduction in the number
of integration variables; the l.h.s.\ involves $N$ complex integration,  the r.h.s.\ has only four one-dimensional integrals
after all Grassmannians are eliminated.
In our concrete application one of these four integrals can be performed trivially, yielding eventually the three-fold integration
in~\cref{realsusyexplAAr}. For more details on SUSY calculus and for the complete proof of~\cref{realsusyexplAAr},
see the proof of~\cite[Eqs. (34)-(37)]{Cipolloni2020}.

Having explained our key formula~\cref{realsusyexplAAr}, we now conclude this section with the proof of~\cref{theo:bsmaleig}.

\begin{proof}[Proof of~\cref{theo:bsmaleig}]

The proof of this theorem is completely analogous to the proof of~\cite[Theorem 2.3]{Cipolloni2020} after replacing the bound in~\cite[Lemma 6.4]{Cipolloni2020} with the improved bound in~\cref{lem:remsmall} below. Furthermore, two technical estimates from~\cite[Lemma 5.2, Lemma 6.3]{Cipolloni2020} are used throughout the proof. These are straightforward but tedious bounds on
explicit integrals of the form
\[
\int_{\Gamma\setminus\widetilde{\Gamma}} \frac{e^{Nf(\xi)}}{\xi^b}\, \dif \xi \qquad \mathrm{and} \qquad \int_{N^\rho}^\infty\int_{N^{\rho/2}a^{-1}}^1 \frac{e^{-Ng(a,\tau,\eta)}}{a^{\alpha-1}\tau^{\gamma-1/2}}\, \dif \tau\dif a,
\]
respectively. Here $\Gamma$ is any contour around zero not crossing or encircling $-1$, 
\[
\widetilde{\Gamma}:=\{\xi\in\Gamma: |\xi|\le N^\rho\},
\]
with some small fixed constant $\rho>0$
and $b\ge 0$, $\alpha\ge 2$, $1\le \gamma\le \alpha$ are additional parameters. We will use these  technical lemmas also in the proof of~\cref{theo:bsmaleig}, but we will not repeat them here, just refer to~\cite{Cipolloni2020}. 
To make the presentation cleaner we only present the proof in the more critical case $\delta\in [0,1]$, the case $\delta\in [-10N^{-1/2},0)$ is analogous and is omitted (see e.g.~\cite[Section 6.2]{Cipolloni2020}). 

First we show that the regime where at least one among $a$, $\tau$ and $\xi$ is small gives
a negligible contribution to the triple integral. The proof of this lemma is postponed to~\cref{sec:addl}.
\begin{lemma}
\label{lem:remsmall}
Fix a small $\rho>0$. Let $\delta\in [0,1]$, $I=I_a:=[0,N^{\rho/2}a^{-1}]$, and let $c(N,\delta)$ be defined as in~\cref{cn}. Then, for any positive $E\lesssim c(N,\delta)$ it holds that
\begin{equation}
\label{eq:slimpb}
\begin{split}
&\int_\Gamma \dif \xi \int_0^{+\infty} \dif a\int_0^1 \dif \tau \frac{\xi^2a}{\tau^{1/2}} e^{N[f(\xi)-g(a,\tau,\eta)]} G_N(a,\tau,\xi,z)\\
&\quad=
\int_{\Gamma\setminus\widetilde{\Gamma}} \dif \xi \int_{N^\rho}^{+\infty} \dif a\int_{[0,1]\setminus I} \dif \tau \frac{\xi^2a}{\tau^{1/2}} e^{N[f(\xi)-g(a,\tau,\eta)]} G_N(a,\tau,\xi,z)+\landauO{\mathcal{E}}\\
&\mathcal{E}:= N^{5/2+\rho}e^{-\frac{1}{2}N^{1-2\rho}}\left(\frac{c(N,\delta)^{-1/2}e^{-(N\eta^2)/2}}{E^{1/2}\vee [c(N,\delta)^{1/2}N^{1/2}|\eta|]}+\frac{1+\abs{\log(NE^{2/3})}}{c(N,\delta)}\right).
\end{split}
\end{equation}
\end{lemma}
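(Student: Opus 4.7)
The plan is to decompose the triple integral into the main truncated piece appearing on the right-hand side of \cref{eq:slimpb} plus three remainder integrals $R_\xi$, $R_a$, $R_\tau$ collecting, respectively, the small-$\xi$, small-$a$, and small-$\tau$ regions $\{\xi\in\widetilde\Gamma\}$, $\{0\le a\le N^\rho\}$, $\{\tau\in I_a\}$. It then suffices to bound each remainder by $\mathcal{O}(\mathcal{E})$.

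For each remainder the strategy is the same: extract an exponential suppression of order $e^{-\frac12 N^{1-2\rho}}$ from the integrand and then control the remaining polynomial structure by the two technical estimates from \cite[Lemmas 5.2 and 6.3]{Cipolloni2020}. For $R_a$ and $R_\tau$ this suppression is explicit in $g$: the logarithmic singularities $-\log a$ and $-\tfrac12\log\tau$ force $g(a,\tau,\eta)\ge \tfrac12 N^{-2\rho}+g(a^\ast,\tau^\ast,\eta)$ once $a\le N^\rho$ or $\tau\le N^{\rho/2}/a$, so that $e^{-Ng}$ is suppressed by the desired factor relative to its saddle value. For $R_\xi$ one deforms $\Gamma$ to pass through the saddle of $f(\cdot,E)$ so that the portion $\widetilde\Gamma$ lies on the steep-descent tails; this deformation is permissible since the only non-analyticity of $f$ lies at $\xi=-1$. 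Once the universal $e^{-\frac12 N^{1-2\rho}}$ factor is extracted, expanding $G_N$ monomial-by-monomial as dictated by \cref{eq:newbetG} and integrating term-by-term via the cited auxiliary lemmas yields the $N^{5/2+\rho}$ polynomial prefactor together with the denominator expression appearing inside the parentheses of $\mathcal{E}$.

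The main obstacle is to recover the $e^{-\frac12 N\eta^2}$ factor and the $1/(E^{1/2}\vee c(N,\delta)^{1/2}N^{1/2}|\eta|)$ denominator inside $\mathcal{E}$; neither feature follows from the logarithmic singularities alone. They emerge instead from the $\eta$-dependent term $2\eta^2 a^2(1-\tau)/(1+2a+a^2\tau)$ appearing in $g$, which at the leading-order $(a,\tau)$-saddle produces exactly the shift $-N\eta^2/2$ in the exponent of $e^{-Ng}$. Preserving this gain uniformly across the small-$\tau$ regime (where the shift degrades) and bifurcating the analysis according to whether $E$ or $N\eta^2$ dominates in the local quadratic expansion of $g$ around the saddle is the crux of the argument. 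It proceeds along the lines already established for \cite[Lemma 6.4]{Cipolloni2020}, now tracked carefully enough to yield the improvement stated in \cref{eq:slimpb}.
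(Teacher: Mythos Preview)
Your overall architecture---split off the small-$a$, small-$\tau$, small-$\xi$ regions, harvest an $e^{-\frac12 N^{1-2\rho}}$ factor from each, and control the rest via \cite[Lemmas~5.2 and~6.3]{Cipolloni2020}---matches the paper's proof, which carries out exactly this hierarchical decomposition (first $a\le N^\rho$ over all $\xi,\tau$; then $a\ge N^\rho$, $\tau\in I$ over all $\xi$; then $a\ge N^\rho$, $\tau\notin I$, $\xi\in\widetilde\Gamma$). Two points, however, are misidentified.

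First, the exponential suppression at the boundaries $a=N^\rho$ and $\tau=N^{\rho/2}/a$ is \emph{not} driven by the logarithmic singularities $-\log a$, $-\tfrac12\log\tau$; at these boundaries those terms are harmless. The paper instead uses the monotonicity $g(a,\tau,\eta)\ge g(a,1,0)=f(a)$ and the large-$a$ expansion $f(N^\rho)=\delta N^{-\rho}+\tfrac12 N^{-2\rho}+\ldots$ (and the analogous expansion of $g(a,N^{\rho/2}/a,0)$). This is a fixable imprecision.

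Second, and more seriously, your account of the $\eta$-mechanism does not work. You write that the term $2\eta^2 a^2(1-\tau)/(1+2a+a^2\tau)$ ``at the leading-order $(a,\tau)$-saddle produces exactly the shift $-N\eta^2/2$''. But the $\tau$-minimum of $g$ lies at the boundary $\tau=1$, where this term vanishes identically; there is no $\eta$-gain at the saddle, and no ``local quadratic expansion of $g$ around the saddle'' will produce it. The paper's route---which is precisely the improvement over \cite[Lemma~6.4]{Cipolloni2020}---is the opposite: in the \emph{small}-$\tau$ regime ($\tau\in I_a$, $a\ge N^\rho$) the $\eta$-term is \emph{largest}, and one uses the pointwise lower bound
\[
\frac{2\eta^2 a^2(1-\tau)}{1+2a+a^2\tau}\ge \frac{\eta^2 a}{2}
\]
to insert an extra $e^{-(N-2)\eta^2 a/2}$ into the $a$-integrand. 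Integrating $e^{-(N-2)(E+\eta^2/2)a}$ over $a\ge N^\rho$ then simultaneously yields the factor $e^{-N\eta^2/2}$ and the denominator $E^{1/2}\vee c(N,\delta)^{1/2}N^{1/2}|\eta|$. So the shift does not ``degrade'' in the small-$\tau$ region; that region is where the entire $\eta$-improvement lives.
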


Using~\cref{lem:remsmall}, by~\cref{realsusyexplAAr}, we conclude that
\begin{equation}
\label{eq:maincontr}
\begin{split}
    &\E \Tr   [Y^z+E]^{-1}\\
    &=\frac{N}{4\pi \ii} \int_{\Gamma\setminus\widetilde{\Gamma}} \dif \xi \int_{N^\rho}^{+\infty} \dif a\int_{[0,1]\setminus I} \dif \tau\frac{\xi^2a}{\tau^{1/2}} e^{N[f(\xi)-g(a,\tau,\eta)]} G_N(a,\tau,\xi,z)+\mathcal{O}(N\mathcal{E}),
\end{split}
\end{equation}
with $\mathcal{E}$ defined in~\cref{eq:slimpb}. In particular, we are left only with the regime where all $a$, $\tau$, $\xi$ are large (in absolute value). In this regime we can use Taylor and Laurent  expansions for the functions
 $f$, $g$, $G_N$; these are listed in~\cite[Eqs. (75)-(77)]{Cipolloni2020}.
  Then we use the bounds from~\cite[Lemma 5.2]{Cipolloni2020} to estimate the regime $|\xi|\ge N^\rho$ and the ones from~\cite[Lemma 6.3]{Cipolloni2020} to estimate the regime $(a,\tau)\in [N^\rho,+\infty)\times ([0,1]\setminus I)$, which together imply
\begin{equation}
\label{eq:intermb}
\begin{split}
&\left|\frac{N}{4\pi \ii} \int_{\Gamma\setminus\widetilde{\Gamma}} \dif \xi \int_{N^\rho}^{+\infty} \dif a\int_{[0,1]\setminus I} \dif \tau\frac{\xi^2a}{\tau^{1/2}} e^{N[f(\xi)-g(a,\tau,\eta)]} G_N(a,\tau,\xi,z)\right|\\
&\qquad\quad\lesssim \frac{e^{-\frac{1}{2}N\eta^2}c(N,\delta)^{-1/2}}{\sqrt{E}\vee [c(N,\delta)^{1/2}N^{1/2}|\eta|]}+c(N,\delta)^{-1}\bigl[1+\abs{\log(NE^{2/3})}\bigr].
\end{split}
\end{equation}

Finally, combining~\cref{eq:maincontr} and~\cref{eq:intermb}, we readily conclude
  \begin{equation}
        \label{eq:befmar}
        \abs{\E \Tr [Y^z+E]^{-1} } \lesssim  \frac{e^{-\frac{1}{2}N\eta^2}c(N,\delta)^{-1/2}}{\sqrt{E}\vee [c(N,\delta)^{1/2}N^{1/2}|\eta|]}+c(N,\delta)^{-1}\bigl[1+\abs{\log(NE^{2/3})}\bigr],
    \end{equation}
    where we used the $N\mathcal{E}$ is always smaller than the r.h.s.\ of~\cref{eq:befmar}. Given~\cref{eq:befmar}, choosing $E=x c(N,\delta)$, the bound in~\cref{eq:optb} follows by a simple Markov inequality.
\end{proof}

\normalcolor

\section{Proof of~\cref{theo overlap}}
We begin with an a priori bound on the minimal eigenvalue gap 
\begin{equation}\label{Delta def}
    \Delta:=\min_{i\ne j}\abs{\lambda_i-\lambda_j}
\end{equation}
which follows directly from estimating the smallest two singular values of \(X-z\), see e.g.~\cite[Theorem 3.1.1]{ge} or~\cite[Theorem 1.9]{MR4235467}. 
\begin{lemma}\label{lemma Omega Delta}
    There exist constants \(c,C>0\) such that for Ginibre matrices \(X\) and for any $K\ge 100$ we have 
    \begin{equation}
        \Omega_\Delta:=\set{\Delta> N^{-3K}},\qquad \Prob(\Omega_\Delta^c) \le \frac{C}{N^{2K-5}}+Ce^{-cN}.
    \end{equation} 
\end{lemma}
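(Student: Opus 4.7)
\medskip

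The plan is to follow the classical net argument that reduces control of the minimum eigenvalue gap $\Delta$ to a joint lower tail bound on the two smallest singular values of the shifted matrix $X-z$, uniformly over $z$ in a fine deterministic grid. This reduction is the backbone of the cited results~\cite{ge,MR4235467}, and it is used here as a black box.

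First I would condition on the norm-bound event $\set{\norm X \le 2}$, whose complement has probability bounded by $Ce^{-cN}$ by the standard large deviation estimate. On this event all eigenvalues lie in the closed disk of radius $2$, and I would cover this disk by a deterministic grid $\cN\subset\C$ of spacing $N^{-K}$, so $\abs{\cN}\lesssim N^{2K+4}$. On the event $\set{\Delta\le N^{-3K}}\cap\set{\norm X\le 2}$ there exist two eigenvalues $\lambda_i\neq\lambda_j$ with $\abs{\lambda_i-\lambda_j}\le N^{-3K}$, so some $z\in\cN$ satisfies $\max(\abs{\lambda_i-z},\abs{\lambda_j-z})\lesssim N^{-K}$.

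Second, testing the min-max characterisation
\[
\sigma_2(X-z)^2=\min_{\dim W=2}\max_{v\in W\setminus\set 0}\frac{\norm{(X-z)v}^2}{\norm v^2}
\]
on the two-dimensional subspace $W=\mathrm{span}(R_i,R_j)$ spanned by the right eigenvectors corresponding to $\lambda_i,\lambda_j$ yields a deterministic comparison of the form
\[
\sigma_1(X-z)\,\sigma_2(X-z)\lesssim \abs{\lambda_i-z}\,\abs{\lambda_j-z}\cdot\kappa_{ij}\lesssim N^{-2K}\kappa_{ij},
\]
where $\kappa_{ij}$ is a condition number factor of the $2\times 2$ Gram matrix of $R_i,R_j$, controlled by $N^{O(1)}$ with overwhelming probability via a crude overlap bound (e.g.\ those already available in~\cite{2005.08930,2005.08908}).

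Third, for each fixed $z\in\cN$ the joint tail estimate
\[
\Prob\bigl(\sigma_1(X-z)\sigma_2(X-z)\le s\bigr)\lesssim s\cdot N^{O(1)}
\]
is exactly the content of~\cite[Theorem 3.1.1]{ge} and~\cite[Theorem 1.9]{MR4235467}; invoking it with $s\sim N^{-2K+O(1)}$ and union-bounding over $\cN$ gives
\[
\Prob\bigl(\Delta\le N^{-3K}\bigr)\le \abs{\cN}\cdot N^{-2K+O(1)}+Ce^{-cN}\lesssim N^{-(2K-5)}
\]
for $K\ge 100$, which is the claimed estimate. The only substantive step beyond bookkeeping is the overlap control entering $\kappa_{ij}$, but the generous slack between the gap exponent $3K$ and the probability exponent $2K-5$ is tailored precisely to absorb such polynomial losses, so the cited black-box bounds suffice without further refinement.
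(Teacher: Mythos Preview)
Your overall strategy --- cover the disk by a net and reduce the gap event to a tail bound on the two smallest singular values of \(X-z\) at net points --- is exactly what the paper indicates by its citation of~\cite{ge,MR4235467}, so the approach is the right one. Two points in the execution need correction.

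First, the eigenvector condition factor \(\kappa_{ij}\) is superfluous. Restricting \(A:=X-z\) to \(W=\mathrm{span}(R_i,R_j)\), the product of the two singular values of \(A|_W\) equals the area--distortion factor, and since \((X-z)R_k=(\lambda_k-z)R_k\) this is exactly \(|\lambda_i-z|\,|\lambda_j-z|\); together with the interlacing \(\sigma_k(A)\le s_k(A|_W)\) you get \(\sigma_1(X-z)\sigma_2(X-z)\le|\lambda_i-z|\,|\lambda_j-z|\) with no extra factor. This removes any need to import overlap bounds (and with it the worry of circularity).

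Second, and this is the genuine gap, your bookkeeping does not close. With spacing \(N^{-K}\) you have \(|\cN|\sim N^{2K}\) and \(\sigma_1\sigma_2\lesssim N^{-2K}\) at the relevant net point, so the union bound together with your stated tail \(\Prob(\sigma_1\sigma_2\le s)\lesssim s\,N^{O(1)}\) yields only
\[
|\cN|\cdot s\,N^{O(1)}\;\sim\;N^{2K}\cdot N^{-2K+O(1)}\;=\;N^{O(1)},
\]
which is not \(\lesssim N^{-(2K-5)}\). A linear-in-\(s\) tail is precisely the borderline that can never beat the net size: with spacing \(\epsilon\) the union bound is \(\epsilon^{-2}\cdot N^{O(1)}\epsilon^{2}=N^{O(1)}\) for any \(\epsilon\). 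What the argument actually requires --- and what the cited inputs~\cite[Theorem~3.1.1]{ge},~\cite[Theorem~1.9]{MR4235467} furnish --- is a joint tail of the form \(\Prob(\sigma_1\sigma_2\le s)\lesssim N^{O(1)}s^{\gamma}\) with \(\gamma>1\) (coming from the repulsion between the two smallest singular values). With such a bound and a net of spacing \(N^{-3K}\) the union bound gives \(N^{6K}\cdot N^{O(1)}N^{-6K\gamma}\), which for \(K\ge 100\) is comfortably \(\le N^{-(2K-5)}\).
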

Together with the singular value bound from~\cref{theo:bsmaleig} we obtain the following a priori bound on overlaps. 
\begin{lemma}\label{lemma Omega O}
    There exists a constant \(C>0\) such that for Ginibre matrices \(X\) the event
    \(\Omega_O:=\set{\max_{i}O_{ii}<N^{12K}}\) satisfies
    \begin{equation}
        \Prob(\Omega_O^c\cap \Omega_\Delta) \le \frac{C}{N^{K}},
    \end{equation}
for any $K\ge 100$.
\end{lemma}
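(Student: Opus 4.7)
The strategy I would take is to combine a \emph{deterministic pseudospectral inequality} --- showing that a large overlap $O_{i^\ast i^\ast}$ forces the smallest singular value of $X-z$ to be very small at every point $z$ close to the corresponding eigenvalue --- with a \emph{net argument} that rules out this bad event using~\cref{theo:bsmaleig}.

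For the deterministic step, let $i^\ast$ attain the maximum overlap $O_{i^\ast i^\ast}=\max_i O_{ii}$. Starting from the residue decomposition $(X-z)^{-1}=\sum_j R_j L_j^\ast/(\lambda_j-z)$, testing $\norm{(X-z)^{-1}}\ge\norm{(X-z)^{-1} v}$ against $v:=L_{i^\ast}/\norm{L_{i^\ast}}$, and applying biorthogonality $\braket{L_i,R_j}=\delta_{ij}$ together with the Cauchy--Schwarz bound $\abs{\braket{L_j,L_{i^\ast}}}\le\norm{L_j}\norm{L_{i^\ast}}$ and the reverse triangle inequality, I would obtain
\begin{equation*}
    \norm{(X-z)^{-1}}\ge\frac{\sqrt{O_{i^\ast i^\ast}}}{\abs{\lambda_{i^\ast}-z}} - \sum_{j\ne i^\ast}\frac{\sqrt{O_{jj}}}{\abs{\lambda_j-z}}.
\end{equation*}
On $\Omega_\Delta$ and for $\abs{\lambda_{i^\ast}-z}\le\Delta/(4N)$, the second sum is bounded by $2N\sqrt{O_{i^\ast i^\ast}}/\Delta$, which is at most half of the first; this yields the key pointwise bound $\sigma_1(X-z)\le 2\abs{\lambda_{i^\ast}-z}/\sqrt{O_{i^\ast i^\ast}}$.

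For the net argument, I would fix $\epsilon:=N^{-3K-2}$ and take a deterministic lattice $\mathcal N$ of spacing $\epsilon$ covering the region $\set{\abs{z}\le 1+5N^{-1/2}}$ where~\cref{theo:bsmaleig} applies, so that $\abs{\mathcal N}\le CN^{6K+4}$. On $\Omega_\Delta$ one has $\epsilon<\Delta/(4N)$, hence on $\Omega_\Delta\cap\Omega_O^c$ (together with the high probability event that $\lambda_{i^\ast}$ lies in the covered region, handled separately by a spectral radius estimate) the nearest net point $z_\ell$ to $\lambda_{i^\ast}$ satisfies $\lambda_1(Y^{z_\ell})\le 4\epsilon^2/N^{12K}$ by the deterministic step. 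Applying~\cref{theo:bsmaleig} at each $z_\ell$ with $x\, c(N,\delta_\ell)=4\epsilon^2/N^{12K}$ and using $c(N,\delta)\ge N^{-2}$, the dominant $\sqrt{x}$ contribution gives $\Prob(\lambda_1(Y^{z_\ell})\le 4\epsilon^2/N^{12K})\lesssim \epsilon N^{1-6K}$. A union bound then produces $\abs{\mathcal N}\cdot\epsilon N^{1-6K}\lesssim N^{3-3K}$, which is $\le C/N^K$ for any $K\ge 100$.

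The main obstacle will be the deterministic bound. Its derivation is only non-circular when $i^\ast$ is chosen as the maximiser of $O_{ii}$, so that the remaining terms $\sqrt{O_{jj}}$ can be uniformly dominated by $\sqrt{O_{i^\ast i^\ast}}$; the gap bound from~\cref{lemma Omega Delta} is essential to ensure the required separation $\abs{\lambda_j-z}\ge\Delta/2$ of the other eigenvalues. A secondary technicality is that eigenvalues can, with non-negligible probability, sit marginally outside the region $\delta\ge-10N^{-1/2}$ covered by~\cref{theo:bsmaleig}, requiring a crude auxiliary argument based on the boundedness of $\norm{X}$ and the Weyl bound $\sigma_1(X-z)\ge\abs{z}-\norm{X}$ to handle that regime.
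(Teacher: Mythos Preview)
Your proposal is correct and follows essentially the same approach as the paper: a deterministic pseudospectral bound obtained from the residue decomposition of \((X-z)^{-1}\) (using that \(i^\ast\) maximises \(O_{ii}\) to dominate the remaining terms), combined with a net/union-bound argument invoking~\cref{theo:bsmaleig}. The only differences are cosmetic---the paper uses an \(N^{-4K}\)-grid and the operator-norm reverse triangle inequality directly rather than testing against \(L_{i^\ast}/\norm{L_{i^\ast}}\), and your mention of biorthogonality is superfluous (only Cauchy--Schwarz and the triangle inequality are actually used)---but the argument and the arithmetic are the same.
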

\begin{proof}
    We claim that
    \begin{equation}\label{Omega O inclusion}
        \Omega_{O}^c\cap \Omega_\Delta \subset\bigcup_{z}^{N^{8K}} \set{\sigma_1(X-z)\le 2N^{-10K}},
    \end{equation}
    where the union is taken over \(z\)'s on an \(N^{-4K}\)-grid
    inside the unit disk. Indeed, for \(i:=\arg\max_j O_{jj}\) there exists \(z\in B(\lambda_i,N^{-4K})\) on the grid
    and we now show that $\sigma_1(X-z)\le 2N^{-10K}$ for this $z$. We
    use $\sigma_1(X-z)= \| (X-z)^{-1}\|^{-1}$  and  the spectral decomposition
    \[
    \frac{1}{X-z} = \sum_i \frac{R_i L_i^*}{\lambda_i-z}.
    \]
    Since on the event \(\Omega_\Delta\) for each \(j\ne i\) we have \(\abs{\lambda_j-z}\ge \abs{\lambda_i-\lambda_j}-\abs{\lambda_i-z}\ge N^{-3K}-N^{-4K}\ge N^{-3K}/2\) it follows that 
    \begin{equation}
        \begin{split}
            \frac{1}{\sigma_1(X-z)}&=\norm{(X-z)^{-1}} \ge \sqrt{O_{ii}}\Bigl(\frac{1}{\abs{\lambda_i-z}}-\sum_{j\ne i}\frac{1}{\abs{\lambda_j-z}}\Bigr) \\
            &\ge \sqrt{O_{ii}}(N^{4K}-2N^{3K+1})\ge \sqrt{O_{ii}}N^{4K}/2\ge N^{10K}/2,
        \end{split}
    \end{equation}
    confirming~\cref{Omega O inclusion}. Thus it follows from~\cref{theo:bsmaleig} and a union bound that 
    \begin{equation}
        \Prob(\Omega_{O}^c\cap \Omega_\Delta)\le C N^{8K-10K+2} \le \frac{C}{N^{K}},
    \end{equation}        
    concluding the proof of the Lemma. 
\end{proof}
Using the a priori bounds from~\crefrange{lemma Omega Delta}{lemma Omega O} we are ready to present the proof of~\cref{theo overlap}. The basic idea is to relate the eigenvalue overlaps to the area of the pseudo-spectrum of \(X\), see e.g.~\cite[Section 52]{MR2155029},~\cite[Section 3.6]{MR4095019},~\cite[Lemma 3.2]{1906.11819} or~\cite[Lemma 2.3]{2005.08908} for a quantitative version.
\begin{proof}[Proof of~\cref{theo overlap}]
    We introduce the event \(\Omega_{\Delta,O}:=\Omega_\Delta\cap\Omega_O\) and claim that on \(\Omega_{\Delta,O}\) for \(\epsilon:=N^{-12 K}\) we have 
    \begin{equation}\label{ov ineq}
        \sum_{\lambda_i\in\Omega} O_{ii} \le 4\frac{\abs{\set*{z\in \Omega+B(0,N^{-6K})\; : \; \sigma_1(X-z)\le \epsilon}}}{\epsilon^2}.
    \end{equation}
    Indeed, first note that
    \(B(\lambda_i,O_{ii}^{1/2}\epsilon/2)\cap B(\lambda_j,O_{jj}^{1/2}\epsilon/2)=\emptyset\) for \(i\ne j\) due to 
    \[\abs{\lambda_i-\lambda_j}\ge \Delta > N^{-3K}  > \epsilon(O_{ii}^{1/2}/2+O_{jj}^{1/2}/2). \]
    Then, for \(z\in B(\lambda_i,O_{ii}^{1/2}\epsilon/2 )\) we have 
    \[ 
       \frac{1}{\sigma_{1}(X-z)}=\norm{(X-z)^{-1}} \ge \frac{O_{ii}^{1/2}}{\abs{\lambda_i-z}}- \sum_{j\ne i} \frac{O_{jj}^{1/2}}{\abs{\lambda_j-z}}\ge \frac{2}{\epsilon}-N\frac{N^{6K}}{\Delta-\epsilon O_{ii}^{1/2}/2} \ge \frac{1}{\epsilon}  
    \]
    and from this relation it follows that
    \[\bigcup_{\lambda_i\in\Omega}B(\lambda_i,O_{ii}^{1/2}\epsilon/2)\subset \set*{z\in \Omega+B(0,N^{-6K})\; : \;
    \sigma_1(X-z)\le \epsilon}.\]
    Comparing the volumes of both sides we obtain~\cref{ov ineq}. 
    
    Now  from~\cref{ov ineq} and~\cref{eq:optb} we get 
    \begin{equation}
        \label{eq:finbhopb}
        \begin{split}
            &\Exp*{\sum_{\lambda_i\in \Omega}O_{ii}\given \Omega_{\Delta,O}}\\
            &\quad\lesssim \int_{\Omega+B(0,N^{-6K})} \frac{\condProb*{\sigma_1(X-z)\le \epsilon\given \Omega_{\Delta,O}}}{\epsilon^2}\dif^2z\\
            &\quad\lesssim \int_{\Omega+B(0,N^{-1/2})} \Bigg[(N^2 \big| 1-|z|^2\big| \vee N^{3/2}) K\log N \\
            &\qquad\quad +e^{-\frac{1}{2}N(\Im z)^2}
            \left(\frac{N\big| 1-|z|^2\big|^{1/2}\vee N^{3/4}}{\epsilon}\wedge \frac{N^{3/2}(1-|z|^2)_+
            \vee N}{|\Im z|}\right)\Bigg]\dif^2z \\
            &\quad\lesssim K\log N \left(N^2\int_{\Omega+B(0,N^{-1/2})} \big| 1-|z|^2\big|\dif^2 z \vee N^{3/2}|\Omega+B(0,N^{-1/2})|\right).
        \end{split}
    \end{equation}
    In the last inequality, in order to estimate the terms multiplied by $e^{-N(\Im z)^2/2}$, we performed the $\dif \Re z$ and $\dif \Im z$ integrations
    separately and split the analysis into three regimes: (i) $|1-|z|^2|\le N^{-1/2}$, (ii) $1-|z|^2> N^{-1/2}$ and $|\Im z|\ge N^{-1/2+\xi}$, for some small $\xi>0$, (iii) $1-|z|^2> N^{-1/2}$ and $|\Im z|\le N^{-1/2+\xi}$. The regime (i) is trivial since the factor $|1-|z|^2|$ can be neglected; for (ii) we used that  $e^{-N(\Im z)^2/2}\le e^{-N^\xi/2}$ and so 
    the contribution of this regime is exponentially small; finally in (iii) we used that $\big| 1-|z|^2\big|\le \big| 1- |\Re z|^2\big| $ and that
    \[
    \int_{\Pi(\widetilde{\Omega})} (1- y^2) \dif y \lesssim \sqrt{N} \int_{\widetilde{\Omega}} (1-|z|^2) \dif^2 z,
    \]
    where $\widetilde{\Omega}:=(\Omega+B(0,N^{-1/2}))\cap \{|\Im z|\le N^{-1/2+\xi}\}\cap \{1-|z|^2>N^{-1/2}\}$ and $\Pi(\widetilde{\Omega})\in\R$ is the projection of $\widetilde{\Omega}$ onto the real axis.
    Finally, by the estimate~\cref{eq:finbhopb} used on the event $\Omega_{\Delta,O}$ 
    together with a simple Markov inequality combined with the
    probability bound on the complement 
    $\mathbf{P}(\Omega_{\Delta,O}^c)\le N^{-K}$ from~\crefrange{lemma Omega Delta}{lemma Omega O}, we conclude~\cref{overlap upper bound}.
\end{proof}

\appendix

\section{Proof of~\cref{lem:remsmall}}
\label{sec:addl}

The proof of this lemma is very similar to the proof of~\cite[Lemma 6.4]{Cipolloni2020}, however we present a detailed proof here since we need the slightly improved bound~\cref{eq:slimpb} compared to~\cite[Eq. (92)]{Cipolloni2020}. More precisely, in the current paper we exploit the regularising effect of $\eta$ in the very small $E$ regime (see~\crefrange{eq:secondsmallb}{eq:thirdsmallb} below); notice the additional regularisation $c(N,\delta)^{1/2}N^{1/2}|\eta|$ in the denominator of the first error term in~\cref{eq:slimpb}.
The analogous term in~\cite[Lemma 6.4]{Cipolloni2020} had a $E^{-1/2}$ singularity for small $E$.

\begin{proof}[Proof of~\cref{lem:remsmall}]
Throughout the proof we choose $\Gamma$ as in~\cite[Eq. (48a)]{Cipolloni2020} to make the comparison clearer, i.e. $\Gamma=\Gamma_{z_*}:=\Gamma_{1,z_*}\cup\Gamma_{2,z_*}$ with
\[
\Gamma_{1,z_*}:=\set*{-\frac{2}{3}+\ii t :\given 0\le |t|\le \sqrt{|z_*|^2-\frac{4}{9}}}, \qquad \Gamma_{2,z_*}:=\set*{|z_*|e^{\ii \psi}:\psi\in [-\psi_{z_*},\psi_{z_*}]},
\]
where $\psi_{z_*}:=\arccos[2/(3|z_*|)]$, and $z_*=z_*(E,\delta)$ is defined in~\cite[Eq. (45)]{Cipolloni2020}. We remark that $|z_*|\sim E^{-1/3}\vee \sqrt{\delta E^{-1}}$, and recall that $\widetilde{\Gamma}:=\{\xi\in\Gamma: |\xi|\le N^\rho\}$.

Note that $g(a,\tau,\eta)\ge g(a,\tau,0)$ for any $a,\tau$ and that the map $\tau\mapsto g(a,\tau,0)$ is decreasing for any $a\in [0,+\infty)$ (see~\cite[(iii) of Lemma 6.1]{Cipolloni2020}). Hence, using that  $g(a,\tau,\eta)\ge g(a,1,0)=f(a)$, together with~\cite[(ii)-(iv) of Lemma 6.1]{Cipolloni2020}, we conclude that
\begin{equation}
\label{eq:expbneed}
\sup_{\xi\in \widetilde{\Gamma}}\left|e^{Nf(\xi)}\right|+\sup_{a\in [0,N^\rho]}\left|e^{-Ng(a,\tau,\eta)}\right|\lesssim e^{-Nf(N^\rho)}\lesssim e^{-\frac{1}{2}N^{1-2\rho}},
\end{equation}
where in the last inequality we also used that $f(N^\rho)=(\delta N^{-\rho}+(2N^{2\rho})^{-1})(1+\mathcal{O}(N^{-\rho}))$.

The proof of~\cref{eq:slimpb} is divided into three regimes: 
\begin{enumerate}[label=(\arabic*)]
\item \label{it:1} $a\in [0,N^\rho]$, $\xi\in \Gamma$, $\tau\in [0,1]$ (see~\cref{eq:fisrtsmallb} below),
\item \label{it:2} $a\ge N^\rho$, $\xi\in \Gamma$, $\tau\in I$ (see~\cref{eq:secondsmallb} below),
\item \label{it:3} $a\ge N^\rho$, $\xi\in \widetilde{\Gamma}$, $\tau\in [0,1]\setminus I$ (see~\cref{eq:thirdsmallb} below).
\end{enumerate}
We will now prove that all these three regimes give an exponentially small contribution.

To estimate the regime~\cref{it:1} we consider two further sub-cases: (i) $(a,\xi)\in [0,N^\rho]\times \widetilde{\Gamma}$, (ii) $(a,\xi)\in [0,N^\rho]\times \Gamma\setminus \widetilde{\Gamma}$. In case (i), we write $Ng(a,\tau,\eta)=(N-2)g(a,\tau,\eta)+2g(a,\tau,\eta)$. Then we use~\cref{eq:expbneed} to estimate $e^{-(N-2)g(a,\tau,\eta)}$ and $e^{Nf(\xi)}$ and we use $e^{-2g(a,\tau,\eta)}\lesssim a^2\tau$ for the remaining part. In this way, together with the bound $|G_N|\lesssim N^{2+5\rho}(a^2|\xi|^2\tau)^{-1}$ which follows by the explicit expression of $G_N$ in~\cref{eq:newbetG}, we readily get the estimate $N^{2+7\rho}e^{-N^{1-2\rho}/2}$. Here we used that the area of the domain of integration is bounded by $N^{2\rho}$. In case (ii), we first notice that by the explicit form of $G_N$ in~\cref{eq:newbetG} and the fact that $|\xi|\ge N^\rho$ we have
\[
\left|\int_{\Gamma\setminus \widetilde{\Gamma}} e^{Nf(\xi)} \xi^2 G_N(a,\tau,\xi,z)\, \dif \xi\right|\lesssim N^{2+5\rho}\frac{N^{1/2}\vee (N\delta)}{a^2\tau},
\]
by~\cite[Lemma 5.2]{Cipolloni2020}. Then, proceeding exactly as in case (i) to estimate the $(a,\tau)$-integral, we conclude that this regime is bounded by $N^{2+5\rho}(N^{1/2}\vee (N\delta))e^{-N^{1-2\rho}/2}$. Combining the bounds in cases (i) and (ii) we conclude that
\begin{equation}
\label{eq:fisrtsmallb}
\begin{split}
    &\abs*{\int_\Gamma \dif \xi \int_0^{N^\rho} \dif a\int_0^1 \dif \tau \frac{\xi^2a}{\tau^{1/2}} e^{N[f(\xi)-g(a,\tau,\eta)]} G_N(a,\tau,\xi,z)}\\
    &\qquad\lesssim N^{2+7\rho}(N^{1/2}\vee (N\delta))e^{-\frac{1}{2}N^{1-2\rho}}.
\end{split}
\end{equation}

Next we consider the regime~\cref{it:2}. In this case we will use that $g(a,\tau,0)\ge g(a, N^{\rho/2}a^{-1},0)$, by~\cite[(iii) of Lemma 6.1]{Cipolloni2020} and that by explicit computations
\begin{equation}
\label{eq:needexpb}
g(a, N^{\rho/2}a^{-1},0)=Ea+\frac{\delta}{N^{\rho/2}}+\frac{1}{2N^\rho}+\mathcal{O}\left(\frac{\delta}{N^\rho}+\frac{1}{N^{3\rho/2}}\right).
\end{equation}
Then, using~\cref{eq:expbneed} to estimate $e^{Nf(\xi)}$ when $\xi\in \widetilde{\Gamma}$, and~\cite[Lemma 5.2]{Cipolloni2020} for the regime $\xi\in \Gamma\setminus \widetilde{\Gamma}$, we get that
\begin{equation}
\left|\int_{\Gamma\setminus \widetilde{\Gamma}} e^{Nf(\xi)} \xi^2 G_N(a,\tau,\xi,z)\, \dif \xi\right|\lesssim C(N,\delta,a,\tau),
\end{equation}
with
\[
C(a,\tau)=C(N,\delta,a,\tau):=N^{2+5\rho}(N^{1/2}\vee (N\delta))\left(1+\frac{1}{a^2\tau}\right).
\]
We now write
\[
g(a,\tau,\eta)=g(a,\tau,0)+\frac{2\eta^2a^2(1-\tau)}{1+2a+a^2\tau}\ge g(a, N^{\rho/2}a^{-1},0)+\frac{2\eta^2a^2(1-\tau)}{1+2a+a^2\tau},
\]
and so, using that $e^{-2g(a,\tau,\eta)}\lesssim a\tau$ and that
\[
\frac{2\eta^2a^2(1-\tau)}{1+2a+a^2\tau}\ge \frac{\eta^2a}{2},
\]
by~\cref{eq:needexpb} we get
\begin{equation}
\label{eq:needtoint}
\begin{split}
&\abs*{\frac{a}{\tau^{1/2}}e^{-Ng(a,\tau,\eta)}\int_{\Gamma\setminus \widetilde{\Gamma}} e^{Nf(\xi)} \xi^2 G_N(a,\tau,\xi,z)\, \dif \xi}\\
&\quad\lesssim C(a,\tau)a^2\tau^{1/2}e^{-(N-2)g(a,\tau,\eta)}\\
&\quad\lesssim C(a,\tau)a^2\tau^{1/2} e^{-\frac{1}{2}N^{1-\rho}}e^{-(N-2)[Ea+\eta^2a/2]}.
\end{split}
\end{equation}
Computing the $(a,\tau)$-integral of the r.h.s.\ of~\cref{eq:needtoint}, we conclude that
\begin{equation}
\begin{split}
\label{eq:secondsmallb}
&\left|\int_\Gamma \dif \xi \int_{N^\rho}^\infty \dif a\int_I \dif \tau \frac{\xi^2a}{\tau^{1/2}} e^{N[f(\xi)-g(a,\tau,\eta)]} G_N(a,\tau,\xi,z)\right| \\
&\qquad\quad\lesssim N^{3/2+7\rho}\big(N^{1/2}\vee (N\delta)\big)e^{-\frac{1}{2}N^{1-2\rho}}\frac{e^{-(N\eta^2)/2}}{E^{1/2}\vee [c(N,\delta)^{1/2}N^{1/2}|\eta|]}.
\end{split}
\end{equation}
Notice that in~\cref{eq:secondsmallb} we estimated the integral of r.h.s.\ of~\cref{eq:needtoint} more precisely than in~\cite{Cipolloni2020}. This additional improvement, which is relevant only for small $E$, comes from using the regularising effect of $e^{-(N-2)\eta^2a/2}$.

Finally, in order to conclude the proof of this lemma, we are left with the regime~\cref{it:3}. In this case, since $|\xi|\le N^\rho$, we use~\cref{eq:expbneed} to bound $e^{Nf(\xi)}$ and~\cite[Lemma 6.3]{Cipolloni2020} to estimate the $(a,\tau)$-integral, with again exploiting the regularising effect of $\eta$. Hence, we conclude that
\begin{equation}
\label{eq:thirdsmallb}
\begin{split}
&\left|\int_{\widetilde{\Gamma}} \dif \xi \int_{N^\rho}^\infty \dif a\int_{[0,1]\setminus I} \dif \tau \frac{\xi^2a}{\tau^{1/2}} e^{N[f(\xi)-g(a,\tau,\eta)]} G_N(a,\tau,\xi,z)\right| \\
&\qquad\quad\lesssim N^{5/2+\rho}\big(N^{1/2}\vee (N\delta)\big)e^{-\frac{1}{2}N^{1-2\rho}}\\
&\qquad\qquad\quad\times \left(\frac{e^{-(N\eta^2)/2}}{E^{1/2}\vee [c(N,\delta)^{1/2}N^{1/2}|\eta|]}+\big[N^{1/2}\vee (N\delta)\big]\cdot \bigl[1+\abs{\log(NE^{2/3})}\bigr]\right).
\end{split}
\end{equation}

Combining~\cref{eq:fisrtsmallb},~\cref{eq:secondsmallb},~\cref{eq:thirdsmallb} we conclude~\cref{eq:slimpb}.
\end{proof}

\section{Explicit formulas for the real symmetric integral representation}\label{appendix poly}
Here we collect the explicit formulas for the polynomials of \(a,\xi,\tau\) in the definition of \(G_N\) in~\cref{realsusyexplAAr}.
\begin{align*}
    p_{2,0,0}&:=  a^4 \tau ^2+2 a^3 \xi  \tau +4 a^3 \tau -a^2 \xi ^2 \tau +4 a^2 \xi ^2+8 a^2 \xi +2 a^2 \tau \\
    &\qquad +4 a^2+2 a \xi ^3+8 a \xi ^2+10 a \xi +4 a+\xi ^4+4 \xi ^3+6 \xi ^2+4 \xi +1,   \\
    p_{1,0,0}&:=  - a^4 \xi \tau ^2+a^4 \tau ^2-2 a^3 \xi ^2 \tau -2 a^3 \xi  \tau +4 a^3 \tau -a^2 \xi ^3 \tau -3 a^2 \xi ^2 \tau \\
    &\qquad -2 a^2 \xi  \tau +4 a^2 \xi +2 a^2 \tau +4 a^2+2 a \xi ^2+6 a \xi +4 a+\xi ^3+3 \xi ^2+3 \xi +1,\\
    p_{2,2,0}&:=  4 (a+1) \left(a^2 \tau +a \xi  \tau +2 a \tau +\xi ^2+2 \xi +1\right), \\
    p_{1,2,0}&:=  4 (a+1)  \left(a^2 \tau +a \xi  \tau +2 a \tau +\xi +1\right), \\
    p_{2,0,1}&:=  2  \bigl(a^3 \tau ^2+2 a^2 \xi  \tau +4 a^2 \tau +2 a \xi ^2+2 a \xi  \tau  \\
    &\qquad\qquad+4 a \xi +3 a \tau +2 a+\xi ^3+4 \xi ^2+5 \xi +2\bigr)\\
    p_{1,0,1}&:=  2 \bigl(a^3 \tau ^2+2 a^2 \xi  \tau +4 a^2 \tau +a \xi ^2 \tau +3 a \xi  \tau \\
    &\qquad\qquad+2 a \xi +3 a \tau  +2 a+\xi ^2+3 \xi +2\bigr), \\
    p_{2,2,1}&:=  4 (a+1)  (a+\xi +2), \\
    p_{2,0,2}&:=  a^2 \tau +2 a \xi +4 a+\xi ^2+4 \xi +4.  
\end{align*}

\section*{Acknowledgement}
We would like to thank Folkmar Bornemann for valuable comments on a preliminary version of the present paper. We are grateful to the anonymous referees for their careful reading of our manuscript. Their suggestions to include the numerical experiments of~\cref{hist figure,CG tail fig} and add more details to the supersymmetric derivation significantly improved our paper.

\printbibliography%
\end{document}